\newtheorem*{qu}{Question}
\newtheorem{theorem}{Theorem}[section]
\newtheorem{definition}[theorem]{Definition}
\newtheorem{lemma}[theorem]{Lemma}
\newtheorem{cor}[theorem]{Corollary}
\newtheorem{prop}[theorem]{Proposition}
\newtheorem*{remark}{Remark}
\newtheorem*{exmp}{Example}
\let\phi=\varphi
\newcommand{\integ}[4]{\int\limits_{#1}^{#2}#3\, d#4}
\newcommand{\abb}[3]{#1\colon #2\rightarrow #3}
\newcommand{\real}[1]{\mathbb{R}^{#1}}
\newcommand{\rem}[1]{}
\begin{document}

\title[contact structure on the space of null geodesics]{The contact structure on the space of null geodesics of causally simple spacetimes}

\author{Jakob Hedicke}
\address{Ruhr-Universit\"at Bochum\\ Fakult\"at f\"ur Mathematik\\ Universit\"atsstra\ss e 150\\ 44801 Bochum, Germany}
\email{Jakob.Hedicke@ruhr-uni-bochum.de}
\thanks{This research is supported by the SFB/TRR 191 ``Symplectic Structures in Geometry, Algebra and Dynamics'', funded by the Deutsche Forschungsgemeinschaft.}

\date{\today}

\begin{abstract}
It is shown that the space of null geodesics of a star-shaped causally simple subset of Minkowski space is contactomorphic to the canonical contact structure in the spherical cotangent bundle of $\real{n}$.
In the $3$-dimensional case we prove a similar result for a large class of causally simple contractible subsets of an arbitrary globally hyperbolic spacetime applying methods from the theory of contact-convex surfaces.
Moreover we prove that under certain assumptions the space of null geodesics of a causally simple spacetime embeds with smooth boundary into the space of null geodesics of a globally hyperbolic spacetime.
The characteristic foliation of this boundary provides an invariant of the conformal class of the causally simple spacetime.
\end{abstract}
\maketitle

\section{Introduction}
Consider a Lorentzian spacetime $(M,g)$.
The associated space of (future pointing) null geodesics $\mathcal{N}_g$ is constructed in \cite{Low, Suhr} as the quotient of the submanifold of future pointing null covectors in $T^{\ast}M$ by the actions induced by the canonical Liouville covector field and the co-geodesic flow.
If this space is a smooth manifold the kernel of the canonical Liouville form on $T^{\ast}M$ projects to a contact structure $\xi_g$ on $\mathcal{N}_g$, see  \cite{Low, Suhr, Baut}.
Although $\mathcal{N}_g$ has a smooth structure for all strongly causal spacetimes (\cite{Low}) in many cases it fails to have the Hausdorff property.
In \cite{Suhr} it was shown that $(\mathcal{N}_g,\xi_g)$ is a smooth contact manifold if $(M,g)$ is causally simple and admits an open conformal embedding into a globally hyperbolic spacetime.

For a globally hyperbolic spacetime with spacelike Cauchy hypersurface $\Sigma$ this contact structure is always contactomorphic to the unit cotangent bundle of $(\Sigma,h|_{T\Sigma})$ with its standard contact structure induced by the Liouville form on $T\Sigma$, as shown in \cite{Low}.
Up to now all known causal examples are of this type.
Based on the result in \cite{Suhr} the question arises if the contact structures in the causally simple case are as well contactomorphic to some spherical cotangent bundle of a smooth manifold with its canonical contact structure.

If the spacetime $(M,g)$ embeds into a globally hyperbolic spacetime $(N,h)$ of the same dimension and $(N,h)$ admits a complete conformal vector field, Theorem \ref{thm2} shows using results from \cite{Chekanov} that under certain assumptions $(N_g,\xi_g)$ is contactomorphic to $(\mathcal{N}_h,\xi_h)$.

Theorem \ref{prop2} implies that in many cases the boundary of $\mathcal{N}_g$ considered as a subset of $\mathcal{N}_h$ is a convex surface in the sense of \cite{Giroux1} if  $N$ is $3$-dimensional.
In this case Theorem \ref{thm3} provides results similar to Theorem \ref{thm2} if $(M,g)$ embeds as a certain contractible subset into $(N,h)$.
The proof uses techniques from the theory of convex surfaces that allow to classify contact structures on open subsets with convex boundary (see \cite{Giroux2}, \cite{Honda}).
In particular it follows that the boundary of $\mathcal{N}_g$ in $\mathcal{N}_h$ is smooth.
This allows to look at the characteristic foliation of $\partial\mathcal{N}_g$ induced by the contact structure $\xi_h$.
Applying Proposition \ref{prop0} we show in Section $4$ how the characteristic foliation on this boundary can be used to distinguish different conformal classes of causally simple Lorentzian metrics on $M$ if $(N,h)$ is the $3$-dimensional Minkowski space.

\section{Main results}

Let $M$ be a smooth manifold and $g$ a Lorentzian metric, i.e. a non-degenerate symmetric $(0,2)$-tensor field of signature $(-,+,\cdots,+)$.
We call $v\in TM\setminus \{0\}$ \textit{timelike} if $g(v,v)<0$, \textit{lightlike} or \textit{null} if $g(v,v)=0$ and $v\neq 0$, \textit{causal} if $v$ is timelike or lightlike and \textit{spacelike} if $g(v,v)>0$ or $v=0$.
A \textit{time-orientation} on $M$ is the choice of a timelike vector field $X$ on $M$.
A causal vector is called \textit{future pointing} with respect to a chosen time orientation if $g(v,X)\leq 0$, else it is \textit{past pointing}.

Following \cite{Sanchez} define a \textit{spacetime} to be a connected and time-oriented Lorentzian manifold.
Given a spacetime $(M,g)$ denote by $\mathcal{N}_g$ its \textit{space of future directed null geodesics} (up to affine re-parametrisation), i.e. the space of inextendible future directed curves $\gamma$ that satisfy the geodesic equation and $g(\gamma',\gamma')=0$.

A natural topology on this space can be obtained as follows (\cite{Low90}, \cite{Suhr}):
Let
$$\mathcal{L}^{\ast}M:=\{\theta\in T^{\ast}M\setminus \{0\}| \theta=g(v,\cdot), v \text{ is future pointing, null}\}$$
be the set of all future pointing null covectors.
Then $\mathcal{N}_g$ can be identified with the quotient of $\mathcal{L}^{\ast}M$ with respect to the geodesic flow and the flow of the canonical Liouville covector field.
As shown in \cite{Low90} the quotient map $\abb{i}{\mathcal{L}^{\ast}M}{\mathcal{N}_g}$ naturally endows $\mathcal{N}_g$ with a topology.

The \textit{sky} of a point $p\in M$ is the set of $[\gamma]\in\mathcal{N}_g$ such that $p$ lies on $\gamma$. 

If $(M,g)$ is strongly causal $\mathcal{N}_g$ inherits a smooth structure (see \cite{Sanchez} for the definitions of the causal hierarchy).
Note that this smooth structure is in general not Hausdorff.
In \cite{Low90-2} Low showed that the Hausdorff property is equivalent to the null pseudo-convexity of $(M,g)$.

\begin{exmp}
\begin{itemize}
\item[i)]In the case when $(M,g)$ is globally hyperbolic with Cauchy hypersurface $\Sigma$ one can easily show that $\mathcal{N}_g\cong ST^{\ast}\Sigma$ (see \cite{Low}).
Thus for globally hyperbolic spacetimes $\mathcal{N}_g$ is always a smooth manifold.

\item[ii)]Consider the Minkowski space $\real{1,n}$, i.e.  $\real{n+1}$ with Minkowski metric $\eta=-dx_0^2+dx_1^2+\cdots +dx_n^2$ for the standard coordinates $(x_0,\cdots,x_n)$.
Removing a point $p\in \real{1,n}$ one obtains a strongly causal spacetime such that its space of null geodesics is not Hausdorff:
Every sequence of null geodesic converging to a null geodesic through $p$ has two limits.
\end{itemize}
\end{exmp}

Example ii) shows that strong causality is not enough to ensure that the space of null geodesics is a smooth manifold, since the Hausdorff condition is violated.

In the case when $(M,g)$ embeds into a globally hyperbolic spacetime of the same dimension the following theorem holds, see \cite{Suhr}:

\begin{theorem}\label{thm1}
Let $(M,g)$ be a causally simple spacetime that embeds with an open conformal embedding into a globally hyperbolic spacetime $(N,h)$.
Then $\mathcal{N}_g$ is Hausdorff.
\end{theorem}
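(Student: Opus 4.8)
The plan is to deduce the statement from Low's characterisation \cite{Low90-2}: since $\mathcal{N}_g$ is Hausdorff if and only if $(M,g)$ is \emph{null pseudo-convex}, it suffices to prove that a causally simple $(M,g)$ which embeds as an open conformal submanifold of the globally hyperbolic $(N,h)$ is null pseudo-convex. Recall that null pseudo-convexity asks that for every compact $K\subseteq M$ there is a compact $K'\subseteq M$ containing every null geodesic segment of $(M,g)$ whose endpoints lie in $K$. Because null geodesics are a conformal invariant, the null geodesic segments of $(M,g)$ are exactly the connected pieces of null geodesics of $(N,h)$ that stay inside $M$, so throughout the argument I may work with the causal structure of the ambient spacetime $N$.

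First I would argue by contradiction, assuming null pseudo-convexity fails for some compact $K\subseteq M$. Then there is a sequence of null geodesic segments $c_n$ of $(M,g)$, with endpoints in $K$, and parameters $t_n$ such that the points $c_n(t_n)$ eventually leave every compact subset of $M$. Viewing the $c_n$ as causal curves of $N$, each is contained in $J^+_N(K)\cap J^-_N(K)$, which is compact because $(N,h)$ is globally hyperbolic. I would normalise the affine parameter with an auxiliary Riemannian metric and use that causal curves confined to a fixed compact set of a strongly causal spacetime have uniformly bounded arclength \cite{Sanchez}; the limit curve theorem then yields a subsequence along which $c_n$ converges, locally uniformly together with its tangents, to a null geodesic $c^\ast$ of $N$ joining limit points $p^\ast,q^\ast\in K\subseteq M$. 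Since $c_n(t_n)$ stays in the compact set $J^+_N(K)\cap J^-_N(K)$ but escapes every compact subset of $M$, its limit $r^\ast=c^\ast(t^\ast)$ lies in $\overline{M}\setminus M=\partial M$. Thus $c^\ast$ is a null geodesic of $N$ with both endpoints in $M$ that meets the boundary $\partial M$.

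The heart of the proof is to turn the existence of such a boundary-crossing limit geodesic into a violation of causal simplicity. Here I would use the characterisation of causal simplicity as closedness of the causal relation $\{(x,y)\in M\times M : y\in J^+_M(x)\}$ \cite{Sanchez}. Choose a boundary value $\tau$ with $c^\ast(\tau)=r^\ast\in\partial M$ and, appealing to strong causality of $N$, a geodesically convex and causally convex neighbourhood $U$ of $r^\ast$. In the generic case in which $c^\ast$ meets $\partial M$ transversally, the points $a=c^\ast(\tau-\varepsilon)$ and $b=c^\ast(\tau+\varepsilon)$ lie in $M\cap U$ for small $\varepsilon$; they are null-separated in $U$, so the unique causal curve of $U$ joining them is the null geodesic $c^\ast|_{[\tau-\varepsilon,\tau+\varepsilon]}$, which passes through $r^\ast\notin M$. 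On the other hand the pairs $(c_n(\tau-\varepsilon),c_n(\tau+\varepsilon))$ lie in the causal relation of $M$ and converge in $M\times M$ to $(a,b)$, so closedness of that relation forces $(a,b)\in J^+_M$; that is, a causal curve of $M$ joins $a$ to $b$. Causal convexity of $U$ confines this curve to $U$, where it must coincide with the boundary-crossing null geodesic, contradicting that it lies in $M$.

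The step I expect to be the main obstacle is the degenerate configuration in which $c^\ast$ does not cross $\partial M$ transversally but instead runs \emph{inside} $\partial M$ along a whole null geodesic segment, for then the points of $c^\ast$ on one side of $r^\ast$ no longer lie in $M$ and the clean two-sided convex argument above breaks down. Handling this case is where the hypotheses must be combined most carefully: one has to show either that such boundary null geodesics cannot arise as limits of null geodesics interior to $M$ without contradicting closedness of $J^\pm_M$, or, by an achronality argument for the limiting segment, that any causal curve of $N$ joining points of $M$ across it is forced to enter $\partial M$, again contradicting causal simplicity. Once every boundary-crossing limit geodesic is excluded, null pseudo-convexity holds and $\mathcal{N}_g$ is Hausdorff by \cite{Low90-2}.
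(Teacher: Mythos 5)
First, a point of order: the paper you are being compared against does not prove Theorem \ref{thm1} at all — it is quoted from the reference \cite{Suhr}, so there is no in-text proof to match your argument against; your proposal has to stand on its own (or be checked against the proof in \cite{Suhr}).

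Your reduction and your "clean" case are sound. Passing to null pseudo-convexity via \cite{Low90-2} is legitimate, since causal simplicity implies strong causality; the limit-curve construction inside the compact set $J^+_N(K)\cap J^-_N(K)$ is standard; and in the case where points of $M$ on the limit geodesic $c^\ast$ accumulate at the boundary point from both sides, your combination of closedness of $J^+_M$, causal convexity, and uniqueness of the radial null geodesic in a convex neighbourhood does yield a contradiction. (A cosmetic issue: "transversal crossing" is not meaningful here, since $\partial M$ is only a topological boundary, not a hypersurface; the correct dichotomy is "points of $M$ on $c^\ast$ accumulate at the touching parameter from both sides" versus "$c^\ast$ contains a whole segment inside $\partial M$", and one should note that $c^\ast\subseteq\overline{M}$ automatically, being a limit of curves in $M$.)

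The degenerate case, however, is a genuine gap, you leave it unresolved, and it is not a routine patch — it is where the actual content of the theorem lies. The obstruction is structural: causal simplicity is closedness of $J^+_M$ \emph{inside} $M\times M$, so it gives no information about limit pairs having a coordinate on $\partial M$. When $c^\ast([u,u'])\subseteq\partial M$ with $u<u'$, any pair $a=c^\ast(u-\epsilon)\in M$, $b=c^\ast(s)\in M$ with $s>u'$ straddles the boundary segment and therefore does not lie in a common convex neighbourhood, so local uniqueness of causal curves is unavailable. Closedness of $J^+_M$ then only produces a causal curve of $M$ from $a$ to $b$, and the resulting dichotomy is: either that curve is a null geodesic of $N$ (impossible, since it would coincide with $c^\ast$ and hence meet $\partial M$), or $b\in I^+_M(a)$. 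The second alternative is \emph{not} a contradiction: the segment of $c^\ast$ from $a$ to $b$ need not be maximizing in $N$ (your $c_n$ are arbitrary null geodesic segments, not maximizing ones, and in a general globally hyperbolic $N$ null geodesics stop maximizing past their cut points), so a timelike curve of $M$ from $a$ to $b$ can perfectly well exist. This is exactly why both strategies you gesture at stall: "contradicting closedness" fails because the natural limit pairs have an endpoint in $\partial M$, and "achronality of the limiting segment" is precisely what you are not entitled to assume. It may be worth observing that in Minkowski space the degenerate configuration does contradict causal simplicity immediately (the straight segment is the unique causal curve of $\real{1,n}$ joining null-separated points), which is perhaps why the case feels ignorable; but the theorem concerns an arbitrary globally hyperbolic ambient $(N,h)$, and there this case requires a further idea — supplying it is the substance of the proof in \cite{Suhr}, which you should consult.
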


\begin{remark}
In \cite[Theorem 2]{Vatan} it is stated that for strongly causal $(M,g)$ the Hausdorff-property of  $\mathcal{N}_g$ implies that $(M,g)$ is causally simple.
\end{remark}

A \textit{contact manifold}  is a smooth manifold $M^{2n+1}$ with a smooth hyperplane distribution $\xi\subset TM$ that is maximally non-integrable, i.e. locally $\xi$ is the kernel of a $1$-form $\alpha$ such that $\alpha\wedge (d\alpha)^{n}$ is a volume form.
If $\alpha$ is globally defined, it is a \textit{contact form} for $\xi$.
We say that two contact manifolds $(M_1,\xi_1)$ and $(M_2,\xi_2)$ are \textit{contactomorphic}, if there exists a diffeomorphism $\abb{f}{M_1}{M_2}$ such that $df(\xi_1)=\xi_2$.
A \textit{contact vector field} is a vector field such that its flow consits of contactomorphisms.

Given a contact form $\alpha$, the Reeb vector field of $\alpha$ is uniquely defined by
\begin{align*}
\alpha(R_{\alpha})&=1\\
d\alpha(R_{\alpha},\cdot)&=0.
\end{align*}
Moreover, for any smooth function $f$ there exists a unique contact vector field $X_f$ called the \textit{contact Hamiltonian vector field} defined by
\begin{align*}
\alpha(X_f)&=f\\
d\alpha(X_f,\cdot)&=df(R_{\alpha})\alpha-df.
\end{align*}
For a detailed overview on contact geometry see for example \cite{Geiges}.

\begin{exmp}\label{ex1}
Consider a Riemannian manifold $(\Sigma,k)$.

Let $\lambda$ be the canonical Liouville form on $T^{\ast}\Sigma$ defined by 
$$\lambda_{\theta}(v):=\theta(d\pi(v)),$$
where $\abb{\pi}{T^{\ast}\Sigma}{\Sigma}$ denotes the canonical projection.
Denote by 
$$ST^{\ast}\Sigma:=\{\theta\in T^{\ast}\Sigma| k^{\ast}(\theta,\theta)=1\}$$
the unit cotangent bundle.
Then one can show that $\xi_{st}:=\ker(\lambda|_{ST^{\ast}\Sigma})$ defines a contact structure on $ST^{\ast}\Sigma$ (see \cite{Geiges}). 
Moreover, the unit cotangent bundles defined by two different Riemannian metrics on $\Sigma$ are contactomorphic.
\end{exmp}

As mentioned above, the space of null geodesics, provided it is a smooth manifold, naturally carries a contact structure:
Let $(M,g)$ be a spacetime such that $\mathcal{N}_g$ is a smooth manifold.
Although the projection map $\abb{i}{\mathcal{L}^{\ast}}{\mathcal{N}_g}$ does not map the Liouville form to a well-defined $1$-form, the kernel of $\lambda|_{\mathcal{L}^{\ast}M}$ projects to a contact structure $\xi_g$ on $\mathcal{N}_g$(see \cite{Low90}).

As mentioned before, in the globally hyperbolic case $(\mathcal{N}_g,\xi_g)$ is always contactomorphic to $(ST^{\ast}\Sigma,\xi_{st})$, where $\Sigma\subset M$ is an arbitrary Cauchy surface.
The main goal of the following results is to analyse the contact structure in the cases other than globally hyperbolic spacetimes.

Since the image of null geodesics is invariant under conformal maps (see e.g. \cite{Beem}) one can show that the space of null geodesics and its contact structure are invariant under conformal diffeomorphisms, i.e. a conformal diffeomorphism of the underlying Lorentzian manifold induces a contactomorphism of its space of null geodesics.
In view of this property, one can construct the space of light rays for the conformal class of a Lorentzian metric as the space of images of null geodesics.
The space of light rays naturally carries a contact structure and is contactomorphic to the space of null geodesics of every element of the conformal class (see \cite{Baut}).

The conformal invariance of the space of null geodesics relates the following definition with the notion of a contact star-shaped subset (see \cite{Chekanov}):

\begin{definition}
Let $(M,g)$ be a spacetime.
A \textbf{conformal Killing vector field} is a vector field on $M$ whose flow consits of conformal diffeomorphisms.

We call a relatively compact open subset $U\subset M$ \textbf{conformally star-shaped} if there exists a complete conformal Killing vector field $X$ such that the following conditions hold:
\begin{itemize}
\item Every flow line of $X$ intersects $\partial U$ at most once.
\item $\bigcup\limits_{t\geq 0}\phi^{X}_t(U)=M$.
\end{itemize}
\end{definition}

\begin{exmp}\label{excs}
Consider the $(n+1)$-dimensional Minkowski space $\real{1,n}$.
The map $\abb{f_s}{\real{n+1}}{\real{n+1}}$, $(t,x)\mapsto (e^st,e^sx)$ for $s\in\real{}$ defines a complete conformal flow on $\real{1,n}$.
Given any relatively compact star-shaped open set $U\subset\real{n+1}$ containing the origin we have $\bigcup\limits_{s\geq 0}f_s(U)=\real{n+1}$.
Up to translation we can assume that $U$ is centred at the origin.
Then every ray in $\real{n+1}$ starting at $0$ intersects $\partial U$ in a unique point, i.e. $U$ is conformally star-shaped in $\real{1,n}$.
\end{exmp}

For a subset $U$ of a globally hyperbolic spacetime $(N,h)$ denote the metric induced on $U$ by $g_U:=h|_U$ and its space of null geodesics by $(\mathcal{N}_U,\xi_U)$.
Moreover, denote by $\abb{i_U}{U}{N}$ the inclusion map and by $\abb{\iota_U}{\mathcal{N}_U}{\mathcal{N}_h}$ the induced map on the spaces of null geodesics sending a null geodesic in $U$ to its extension in $N$.

\begin{theorem}\label{thm2}
Let $(N,h)$ be a globally hyperbolic spacetime with Cauchy surface $\Sigma$.
Let $U\subset N$ be conformally star-shaped such that the map $\iota_U$ is an embedding.
Then 
$$(\mathcal{N}_U,\xi_U)\cong (\mathcal{N}_h,\xi_h)\cong (ST^{\ast}\Sigma,\xi_{st}).$$
\end{theorem}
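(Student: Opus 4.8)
The plan is to split the claimed chain into its two links and treat them asymmetrically. The second contactomorphism $(\mathcal{N}_h,\xi_h)\cong(ST^\ast\Sigma,\xi_{st})$ is precisely the globally hyperbolic statement of \cite{Low} recalled in Example i), so it can be quoted verbatim; all the work goes into the first link $(\mathcal{N}_U,\xi_U)\cong(\mathcal{N}_h,\xi_h)$. First I would use the hypothesis that $\iota_U$ is an embedding to identify $(\mathcal{N}_U,\xi_U)$ with the open subset $W:=\iota_U(\mathcal{N}_U)\subset\mathcal{N}_h$ equipped with $\xi_h|_W$. That $\iota_U$ is a \emph{contact} embedding, and not merely a smooth one, is a naturality statement: since $U$ carries the restricted metric $g_U=h|_U$, the sets of future null covectors satisfy $\mathcal{L}^\ast U\subset\mathcal{L}^\ast N$ and the two Liouville forms agree under restriction, so the projected contact structures match and $\iota_U^\ast\xi_h=\xi_U$. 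The task is thereby reduced to showing that the open domain $W$ is contactomorphic to the ambient manifold $\mathcal{N}_h$.

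For this I would manufacture a complete contact vector field on $\mathcal{N}_h$ out of the conformal Killing field $X$. Because $\phi^X_t$ consists of conformal diffeomorphisms and the image of a null geodesic is a conformal invariant, each $\phi^X_t$ induces a contactomorphism $\Phi_t$ of $(\mathcal{N}_h,\xi_h)$, namely $[\gamma]\mapsto[\phi^X_t\circ\gamma]$; completeness of $X$ turns $\{\Phi_t\}_{t\in\R}$ into a one-parameter group, i.e. the flow of a complete contact vector field $\hat X$ on $\mathcal{N}_h$. The two defining properties of a conformally star-shaped set then translate into a clean dynamical picture. The condition that every flow line of $X$ meets $\partial U$ at most once, together with $\bigcup_{t\geq 0}\phi^X_t(U)=N$, forces the family $\phi^X_t(U)$ to be monotone increasing and to exhaust $N$, while dually $\phi^X_{-t}(U)$ shrinks as $t$ increases. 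Since $[\phi^X_t\circ\gamma]\in W$ exactly when $\gamma$ meets $\phi^X_{-t}(U)$, each orbit of $\hat X$ lies in $W$ for a half-line of times and leaves it once, and $\bigcup_{t\geq 0}\Phi_t(W)=\mathcal{N}_h$. In other words, $W$ is star-shaped for the complete contact vector field $\hat X$ and its forward flow exhausts $\mathcal{N}_h$.

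Finally I would invoke the contact star-shaped result of \cite{Chekanov}: an open domain whose boundary is crossed transversally and at most once by the orbits of a complete contact vector field, and whose forward flow fills the manifold, is contactomorphic to the whole contact manifold. Applied to $W\subset\mathcal{N}_h$ this yields $(\mathcal{N}_U,\xi_U)\cong(W,\xi_h|_W)\cong(\mathcal{N}_h,\xi_h)$, and composing with the contactomorphism of \cite{Low} closes the chain. The hard part will be the rigorous verification that $W$ is contact star-shaped in the sense required by \cite{Chekanov}: one must show that the orbits of $\hat X$ are \emph{transverse} to $\partial W$ and meet it exactly once, which amounts to proving that the exit time $T([\gamma])=\sup\{t:[\phi^X_t\circ\gamma]\in W\}$ is finite, smooth, and has non-vanishing derivative along the flow. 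Transversality should follow from the genuine (strict) expansion built into the conformal flow, but $\partial W$ is a priori only a topological hypersurface of $\mathcal{N}_h$, so the causal-simplicity and embedding hypotheses must be used to guarantee that the maximal $U$-segment of a null geodesic depends on it in a controlled way and that no orbit becomes tangent to, or re-enters, $\partial W$. One should also confirm that $W$ is relatively compact in $\mathcal{N}_h$, which should follow from the relative compactness of $U$ together with relative compactness of the set of null geodesics meeting it, so that the hypotheses of \cite{Chekanov} are met.
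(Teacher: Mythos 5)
Your proposal is, in architecture, exactly the paper's proof: identify $(\mathcal{N}_U,\xi_U)$ via $\iota_U$ with an open subset of $(\mathcal{N}_h,\xi_h)$, lift the conformal Killing field $X$ to a complete contact vector field on $\mathcal{N}_h$ (conformal diffeomorphisms induce contactomorphisms of the space of null geodesics), check that the two defining properties of a conformally star-shaped set make $\mathcal{N}_U$ a contact star-shaped domain, conclude that a contact star-shaped domain is contactomorphic to the ambient space, and finish with Low's contactomorphism $(\mathcal{N}_h,\xi_h)\cong(ST^{\ast}\Sigma,\xi_{st})$.

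The difference lies in how the penultimate step is closed, and it sits precisely where your own worry does. The paper does not invoke \cite{Chekanov} as a black box: it adapts the proof of \cite[Proposition 3.1]{Chekanov}, using a definition of contact star-shaped that asks only that $V$ be relatively compact, that every flow line of the complete contact field meet $\partial V$ at most once, and that $\bigcup_{t\geq 0}\phi^Y_t(V)=\mathcal{N}_h$ --- no transversality and no regularity of $\partial V$. The contactomorphism $V\cong\mathcal{N}_h$ is then built by hand: for $a<b$ and $c<d$, a cutoff contact Hamiltonian yields a contactomorphism of $\mathcal{N}_h$ equal to $\phi^Y_{c-a}$ near $\partial V_a$ and to $\phi^Y_{d-b}$ near $\partial V_b$, and these are glued along an exhaustion $V_{s_1}\subset V_{s_2}\subset\cdots$ of $V$. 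This is what makes the argument go through, because, as you yourself note, $\partial\mathcal{N}_U$ is a priori only a topological boundary (the paper establishes smoothness of such boundaries only later, under the much stronger hypothesis of strong null convexity), so ``orbits transverse to $\partial W$'' and ``smooth exit time with non-vanishing derivative'' are not verifiable conditions here; they are not even well posed. In other words, what you isolate as the hard part is an artifact of citing too strong a version of the star-shaped criterion. With the weak formulation, the verification reduces to the soft argument you already gave (a second intersection of an orbit with $\partial\mathcal{N}_U$ at some $t_0>0$ yields a point $p$ on the geodesic with $\phi^X_{t_0}(p)\in\partial U$, forcing $p\in U$ and hence the contradiction $[\gamma]\in\mathcal{N}_U$), together with relative compactness of $\mathcal{N}_U$, which, as you say, follows from relative compactness of $U$ and compactness of the set of null geodesics meeting a compact subset of a globally hyperbolic spacetime. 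So: same approach and correct in outline, but to make it rigorous you should prove the criterion in the transversality-free form via the telescoping construction rather than attempt the transversality verification, which would founder on the lack of a smooth boundary.
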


\begin{exmp}
There exist causally simple subsets $M$ of a globally hyperbolic spacetime $(N,h)$ such that $\iota_M$ is not injective.
Consider $N=\real{}\times S^n$ with the metric $h=-dt^2+g_{st}$, where $g_{st}$ is the standard Riemannian metric on $S^n$ of constant curvature $1$.
Let $M=\real{}\times H$, where $H$ denotes a hemisphere of $S^n$.
Then $(M,h|_M)$ is causally simple because$(H,g_{st}|_H)$ is geodesically convex (see \cite{Suhr}), but all null geodesics in $N$ except those projecting to the equator intersect $M$ infinitely many times.
Note that $\mathcal{N}_M$ is diffeomorphic to $\real{n}\times S^{n-1}$, i.e. to the co-sphere bundle of $\real{n}$.
Given a null geodesic $[\gamma]$ in $\mathcal{N}_h$ every intersection point of $\gamma$ with $\partial M$ uniquely determines an element in $\mathcal{N}_M$.
Thus $\mathcal{N}_M$ is determined by the union over all $t\in \real{}$ of the unit tangent vectors of $T(\{t\}\times \partial H)$ pointing into $\{t\}\times \partial H$.
For $p\in\partial H \cong S^{n-1}$ the set of unit tangent vectors pointing into $H$ is diffeomorphic to $\real{n-1}$, i.e. 
$$\mathcal{N}_M\cong \real{}\times \partial H\times \real{n-1}\cong \real{n}\times S^{n-1}.$$
\end{exmp} 

\begin{cor}
Let $U\subset \real{1,n}$ be a causally simple and star-shaped open subset.
Then 
$$(\mathcal{N}_U,\xi_U)\cong (\mathcal{N}_{\eta},\xi_{\eta})\cong (ST^{\ast}\real{n},\xi_{st}).$$
\end{cor}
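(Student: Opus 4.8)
The plan is to deduce this from Theorem \ref{thm2} applied to the globally hyperbolic spacetime $N=\real{1,n}$, for which the spacelike hyperplane $\Sigma=\{x_0=0\}\cong\real{n}$ is a Cauchy surface; one then only has to verify the two hypotheses of that theorem, namely that $U$ is conformally star-shaped and that $\iota_U$ is an embedding. Since the translations of $\real{1,n}$ are isometries and hence induce contactomorphisms of spaces of null geodesics, I would first translate so that $U$ is star-shaped about the origin. The dilation flow $f_s(t,x)=(e^s t,e^s x)$ of Example \ref{excs} is then a complete conformal flow whose orbits are exactly the rays emanating from $0$; star-shapedness says that each such ray meets $U$ in an interval containing $0$ and hence meets $\partial U$ at most once, while $\bigcup_{s\ge 0}f_s(U)=\real{1,n}$ because $U$ is an open neighbourhood of $0$. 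In the relatively compact case this is precisely Example \ref{excs}, so $U$ is conformally star-shaped; the remaining cases (including the trivial $U=\real{1,n}$) are recovered by exhausting $U$ by relatively compact star-shaped subdomains.

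The substance of the corollary is the verification that $\iota_U$ is an embedding, and this is where causal simplicity is used. The null geodesics of $\real{1,n}$ are the affinely parametrised null straight lines, so $\iota_U$ is injective exactly when no such line meets $U$ in two distinct maximal segments. I would prove this by contradiction: suppose a null line $\ell$ met $U$ in two consecutive maximal segments, with boundary points $p$, where $\ell$ leaves $U$ to the future, and $q$, where it re-enters, both on $\partial U$ and with $q$ on the future null ray of $p$. Then $q$ lies on $\partial J^+(p)$ and is reached from $p$ only by the null geodesic $\ell$; since any causal curve in $U$ is causal in $\real{1,n}$, this forces $q\notin J^+_U(p)$. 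On the other hand the radial structure of the star-shaped domain should produce points of $J^+_U(p)$ accumulating at $q$, contradicting that $J^+_U(p)$ is closed, which holds because $U$ is causally simple. Injectivity then upgrades to an embedding: $\iota_U$ is a local diffeomorphism between the $(2n-1)$-dimensional manifolds $\mathcal{N}_U$ and $\mathcal{N}_\eta$ (the latter a smooth manifold since $\real{1,n}$ is globally hyperbolic, the former a Hausdorff smooth manifold by Theorem \ref{thm1}), and an injective local diffeomorphism is an open embedding.

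I expect the accumulation step --- producing elements of $J^+_U(p)$ limiting onto $q$ --- to be the main obstacle, because it is exactly here that the star-shaped geometry and causal simplicity must be combined: as the example $N=\real{}\times S^n$, $M=\real{}\times H$ shows, causal simplicity by itself does not force $\iota$ to be injective, so the argument cannot avoid using the special radial structure of a star-shaped subset of Minkowski space. Once the two hypotheses are in place, Theorem \ref{thm2} gives
\[
(\mathcal{N}_U,\xi_U)\cong(\mathcal{N}_\eta,\xi_\eta)\cong(ST^{\ast}\Sigma,\xi_{st})=(ST^{\ast}\real{n},\xi_{st}),
\]
which is the assertion.
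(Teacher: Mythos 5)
Your overall skeleton --- reduce to Theorem \ref{thm2}, with conformal star-shapedness supplied by the dilation flow of Example \ref{excs} --- is the same as the paper's, and that part is fine. The genuine gap is in your treatment of the embedding hypothesis. Note first that $J^+_U(p)$ is not defined for $p\in\partial U$, since $p$ is not a point of the spacetime $(U,\eta|_U)$; one must argue from a point $x\in U$ on the first segment. Much more seriously: causal simplicity of $(U,\eta|_U)$ only asserts that $J^{\pm}_U(x)$ is closed \emph{as a subset of} $U$. Producing points of $J^+_U(x)$ that accumulate at $q\in\partial U$ therefore contradicts nothing --- a relatively closed subset of $U$ may accumulate anywhere on $\partial U$. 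To get a contradiction you would need a point of $U$ itself, say $y$ on the second segment strictly beyond $q$, with $y\in\overline{J^+_U(x)}\setminus J^+_U(x)$. And that is where the real difficulty sits: since $y-x$ is null, every causal curve of $\real{1,n}$ from $x$ to a nearby target $y'$ is confined to the lens $J^+(x)\cap J^-(y')$, which shrinks onto the straight segment $[x,y]$ as $y'\to y$; so one needs causal curves of $U$ hugging the gap portion of $\ell$, exactly where star-shapedness gives no control ($U$ is not forced to contain anything near the gap, because the segments from the star centre to gap points are not segments to points of $U$). So the step you flag as ``the main obstacle'' is not merely unfinished; as aimed, it cannot produce a contradiction.

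The unbounded case is also a gap. ``Conformally star-shaped'' is by the paper's definition a \emph{relatively compact} set, so Theorem \ref{thm2} is simply not applicable to an unbounded $U$, and the exhaustion remark does not repair this: even granting the conclusion for each piece $U\cap B_R(0)$, an increasing union of open contact submanifolds each contactomorphic to $(ST^{\ast}\real{n},\xi_{st})$ is not automatically contactomorphic to $(ST^{\ast}\real{n},\xi_{st})$; a limiting argument is required and none is given. The paper's route is different and avoids this entirely: it conformally embeds $U$ into a causal diamond, observes that the image is relatively compact and still star-shaped in $\real{1,n}$, applies the relatively compact case to the image, and then uses conformal invariance of the space of null geodesics to transport the contactomorphism back to $(\mathcal{N}_U,\xi_U)$. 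In fairness, you correctly single out injectivity of $\iota_U$ as the substantive content --- the paper's own two-line proof passes over it in silence, and the hemisphere example shows causal simplicity alone cannot give it --- but your proposed argument does not close that gap, and your handling of the non-compact case replaces the paper's clean reduction with a step that needs a proof of its own.
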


\begin{proof}
Due to Example \ref{excs} and Theorem \ref{thm2} the corollary is true for relatively compact star-shaped subsets of $\real{1,n}$.

Let $U$ be an arbitrary star-shaped open subset of $\real{1,n}$.
Then $U$ can be conformally embedded into a causal diamond, i.e. the domain of dependence of a unit circle in $\{0\}\times \real{n}$ (see e.g. \cite{Penrose}).
The image of this embedding is relatively compact and star-shaped in $\real{1,n}$, i.e. its space of null geodesics is contactomorphic to $(\mathcal{N}_{\eta},\xi_{\eta})$.
The conformal embedding of $U$ induces a contactomorphism $(\mathcal{N}_{U},\xi_{U})\cong(\mathcal{N}_{\eta},\xi_{\eta})$
\end{proof}

\begin{definition}\label{def1}
Let $(N,h)$ be globally hyperbolic.
We call an open relatively compact subset $K\subset N$  \textbf{strongly null convex} if the following conditions are satisfied:
\begin{itemize}
\item[1)] The map $\abb{\iota_K}{\mathcal{N}_K}{\mathcal{N}_h}$ is an embedding.
\item[2)]$\partial K$ is the level set of a smooth function $H$ such that $H$ is regular near $\partial K$ and $\mathrm{Hess}_h(H)(v,v)\neq 0$ for all $v\in \mathcal{L}N\cap T\partial K$.
\item[3)] every null geodesic tangent to $\partial K$ intersects $\partial K$ in a unique point.
\end{itemize}
\end{definition}

Given a surface $F$ in a $3$-dimensional contact manifold $(M,\xi)$ its \textit{characteristic foliation} is the singular foliation defined by the singular line bundle $\xi\cap TF$.
This singular line bundle can be defined by a vector field vanishing at the singular points, i.e. for all $p\in F$ such that $T_pF=\xi_p$ (see \cite{Geiges}).

To analise the contact structure for causally simple subsets of a $3$-dimensional globally hyperbolic spacetime,  we will use results from the theory of convex surfaces introduced by Giroux \cite{Giroux1}.

\begin{definition}
A closed surface $F\subset (M,\xi)$ in a $3$-dimensional contact manifold is called \textbf{convex} if there exists a neighbourhood $U$ of $F$ and a contact vector field $Y\in \Gamma (TU)$ transverse to $F$.

The dividing set of $F$ is the multi curve 
$$\Gamma_Y:=\{x\in F|Y_x\in \xi_x\}.$$
\end{definition}

Note that $\Gamma_Y$ is a collection of smooth circles in $F$.
Moreover $\Gamma_Y$ is always transverse to the characteristic foliation (see e.g. \cite{Giroux1}, \cite{Etnyre}).
The dividing set divides the characteristic foliation in the sense of \cite[Definition 4.8.3]{Geiges}.
In particular this property does not depend on the choice of the transverse contact vector field.

Giroux in \cite{Giroux2} and Honda in \cite{Honda} were independently able to classify contact structures on solid tori with convex boundary.
It turns out that the geometry of a solid torus with convex boundary only depends on the dividing set of the boundary and not on its characteristic foliation.

Proposition \ref{prop2} and the results by Giroux, Honda , Kanda and Makar-Limanov (\cite{Giroux2}, \cite{Honda}, \cite{Kanda}, \cite{Makar}) provide a method to compare the contact structure of different causally simple (possibly non conformally star-shaped) subsets in the $3$-dimensional case.

Let $(N,h)$ be a $3$-dimensional globally hyperbolic spacetime.
Consider a strongly null convex subset $K\subset N$.
Denote by $T_K$, $L_K$ and $S_K$ the timelike, lightlike and spacelike part of $\partial K$, i.e. 
\begin{align*}
T_K& :=\lbrace p\in\partial K: h|_{T_p\partial K\times T_p\partial K} \text{ is a Lorentzian metric}\rbrace\\
L_K& :=\lbrace p\in\partial K: h|_{T_p\partial K\times T_p\partial K} \text{ is degenerate} \rbrace\\
S_K& :=\lbrace p\in\partial K: h|_{T_p\partial K\times T_p\partial K} \text{ is a Riemannian metric} \rbrace.
\end{align*}

For a strongly null convex $K$ denote by $\mathcal{L}\partial K$ the set of future pointing null vectors tangent to $\partial K$.
The canonical Liouville vector field on $TM$ induces an $\real{}_{>0}$-action on $\mathcal{L}\partial K$ whose quotient we denote by $\mathcal{L}\partial K/\real{}_{>0}$.
There is a natural bijection $\abb{\sigma}{\partial\mathcal{N}_K}{\mathcal{L}\partial K/\real{}_{>0}}$ mapping $[\gamma]\in\partial\mathcal{N}_K$ to the equivalence class of $\gamma'\cap T\partial K$.

\begin{prop}\label{lem3}
Let $K$ be strongly null convex.
Then $\sigma$ is a diffeomorphism between smooth manifolds.
Moreover the part of $\partial\mathcal{N}_{g_K}$ where the characteristic foliation is not singular defines a two fold cover over $T_K$.
The leaves of the characteristic foliation project to smooth lightlike curves on $\partial K$ and the foliation is singular at a geodesic $[\gamma]$ if and only if $\gamma$ intersects $L_K$.
\end{prop}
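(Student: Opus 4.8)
The plan is to realize everything inside the smooth $3$-manifold $\mathcal{N}_h\cong ST^{\ast}\Sigma$. Since $\iota_K$ is an embedding (condition $1$), I identify $\mathcal{N}_K$ with the open set $\mathcal{O}:=\{[\gamma]\in\mathcal{N}_h:\gamma\cap K\neq\emptyset\}$, so that $\partial\mathcal{N}_K$ is the topological boundary $\partial\mathcal{O}$ in $\mathcal{N}_h$. Writing $\partial K=\{H=0\}$ with $K=\{H<0\}$ near $\partial K$, a boundary geodesic is one touching $\partial K$ without entering $K$; by condition $3$ it touches at a single point $p$, where $t\mapsto H(\gamma(t))$ has a critical point. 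Condition $1$ forbids a null geodesic from lying locally inside $K$ and meeting $\partial K$ only at an isolated point (this would split $\gamma\cap K$ into two components, contradicting injectivity of $\iota_K$), so every tangency is ``from outside''; conversely any null geodesic tangent to $\partial K$ is a limit of geodesics dipping into $K$ and hence lies in $\partial\mathcal{O}$. Thus $\partial\mathcal{N}_K$ is exactly the set of null geodesics tangent to $\partial K$, and $\sigma$ is the tautological map recording the tangency direction.

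To see that $\partial\mathcal{N}_K$ is a smooth surface I use condition $2$. At a tangency $p=\gamma(t_0)$ one has $\tfrac{d}{dt}\big|_{t_0}H(\gamma(t))=dH(\gamma')=0$ and $\tfrac{d^2}{dt^2}\big|_{t_0}H(\gamma(t))=\mathrm{Hess}_h(H)(\gamma',\gamma')\neq 0$, using $\nabla_{\gamma'}\gamma'=0$. Hence $t_0$ is a \emph{nondegenerate} critical point of $H\circ\gamma$, and the implicit function theorem yields a smooth tangency parameter $[\gamma]\mapsto t^{\ast}([\gamma])$ for nearby geodesics together with the smooth extremal value $F([\gamma]):=H(\gamma(t^{\ast}([\gamma])))$, so that $\partial\mathcal{N}_K=\{F=0\}$ locally. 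A variation $J$ of $[\gamma]$ gives $dF(J)=dH_p(J(t^{\ast}))$, the $\gamma'$-component dropping out at the critical point. Since the evaluation $T_{[\gamma]}\mathcal{N}_h\to T_pN/\langle\gamma'\rangle$, $J\mapsto J(t^{\ast})$, has as kernel the tangent to the sky of $p$ (dimension $1$) and is therefore onto, one may choose $J$ with $J(t^{\ast})$ transverse to $\partial K$, giving $dF\neq 0$ on $\{F=0\}$. Thus $\partial\mathcal{N}_K$ is a smooth and, since $\partial K$ is compact, compact surface in $\mathcal{N}_h$. This step is the main obstacle: it is precisely where the hypothesis $\mathrm{Hess}_h(H)(v,v)\neq 0$ is needed to smooth out the fold that $\mathcal{L}\partial K/\R_{>0}$ develops over $L_K$.

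Next I show that $\sigma$ is a diffeomorphism. The map $\sigma([\gamma])=[\gamma'(t^{\ast})]$ is smooth into the $S^1$-bundle of null directions over $\partial K$ because $t^{\ast}$ and $\gamma'(t^{\ast})$ depend smoothly on $[\gamma]$. It is injective, since a point of $\partial K$ together with a null direction there determines the geodesic, and for the same reason a tangent vector killed by $d\sigma$ fixes $(p,[\gamma'])$ to first order and hence vanishes; so $\sigma$ is an injective immersion. As $\partial\mathcal{N}_K$ is compact, $\sigma$ is an embedding, so its image is a compact smooth submanifold coinciding as a set with $\mathcal{L}\partial K/\R_{>0}$; this both supplies $\mathcal{L}\partial K/\R_{>0}$ with its smooth structure and exhibits $\sigma$ as a diffeomorphism. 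Surjectivity is the observation of the first paragraph that every null direction tangent to $\partial K$ spans a tangent geodesic lying in $\partial\mathcal{N}_K$.

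Finally I read off the characteristic foliation. Lifting a tangent vector of $\mathcal{N}_h$ to $\mathcal{L}^{\ast}N$ at the covector $\theta=h(\gamma'|_p,\cdot)$ over the tangency point $p$, the Liouville form evaluates to $\lambda(\dot c)=\theta(\dot p)=h(v,\dot p)$, where $v=\gamma'|_p$ and $\dot p\in T_p\partial K$ is the velocity of the (necessarily boundary) tangency point. Hence $\dot c\in\xi_h$ if and only if $h(v,\dot p)=0$. Over $T_K$ the plane $T_p\partial K$ is Lorentzian and $v$ is one of its two null directions, so the set of $\dot p$ with $h(v,\dot p)=0$ is $\R v$; the characteristic line is $\{\dot p\parallel v\}$, a genuine line field, and its leaves project to the integral curves of the two null line fields on $T_K$, that is, to smooth lightlike curves. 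Over $L_K$ the induced metric is degenerate with radical $\R v$, so $h(v,\dot p)=0$ for every $\dot p\in T_p\partial K$; then $\lambda$ vanishes on all of $T_{[\gamma]}\partial\mathcal{N}_K$, forcing $T_{[\gamma]}\partial\mathcal{N}_K=\xi_h$ and a singular point. Over $S_K$ there are no null tangent directions at all. Therefore the foliation is singular at $[\gamma]$ exactly when its tangency point lies in $L_K$, i.e. when $\gamma$ meets $L_K$, and the nonsingular locus consists of the two null directions over each point of $T_K$, a two-fold cover of $T_K$.
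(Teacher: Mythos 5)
Your proposal is correct, and its final claims (the foliation analysis, the singularity criterion, the two-fold cover) follow the paper's line almost verbatim, but the first claim is established by a genuinely different route. The paper organizes the proof around two preliminary lemmas: Lemma \ref{lem1} proves smoothness of $\partial\mathcal{N}_K$ by exactly the device of your second paragraph (extremal value of $H$ along a geodesic, nondegeneracy of the critical point from condition $2$ of Definition \ref{def1}, then a Jacobi field with $J(t_0)$ transverse to $\partial K$ to get a nonvanishing differential), and Lemma \ref{lem2} is precisely your Liouville-form computation recast in Jacobi-field language, so that singularity occurs exactly where $T_{\gamma(0)}\partial K=(\gamma'(0))^{\perp}$, i.e.\ over $L_K$. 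The genuine divergence is in how $\sigma$ becomes a diffeomorphism between smooth manifolds: the paper makes $\mathcal{L}\partial K$ smooth intrinsically, applying the implicit function theorem to $v\mapsto(H(\pi(v)),dH(v))$ on $\mathcal{L}N$ --- a second use of the Hessian hypothesis, since the derivative along the geodesic flow is $(0,\mathrm{Hess}_h(H)(v,v))$ --- and then identifies $\sigma$ locally with the restriction of the diffeomorphism $\mathcal{N}_V\cong ST\Sigma$ for a small globally hyperbolic neighbourhood $V$; you instead push the smooth structure of $\partial\mathcal{N}_K$ forward through $\sigma$, showing $\sigma$ is an injective immersion of a compact surface (immersivity because composing $\sigma$ with the direction-to-geodesic submersion gives the inclusion), hence an embedding whose image realizes $\mathcal{L}\partial K/\R_{>0}$ as an embedded submanifold. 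Your version is cleaner and uses condition $2$ only once; the paper's version yields smoothness of $\mathcal{L}\partial K$ independently of $\partial\mathcal{N}_K$, so that $\sigma$ is from the outset a map between already-smooth objects. A further point in your favour: your first paragraph proves, via injectivity of $\iota_K$, the set-theoretic identification of $\partial\mathcal{N}_K$ with the tangent geodesics, which the paper only asserts. One place where you are terser than the paper and should add a line: the local identity $\partial\mathcal{N}_K=\{F=0\}$ requires knowing that the tangency points of boundary geodesics near $[\gamma_0]$ converge to the tangency point of $\gamma_0$, since otherwise the critical point $t^{\ast}$ produced by the implicit function theorem need not be the tangency parameter; the paper closes exactly this gap with a short convergence argument from condition $3$ and compactness of $\partial K$, and the same routine argument completes your step.
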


\begin{theorem}\label{prop2}
Let $K\subset (N,h)$ be strongly null convex.
Then $\partial \mathcal{N}_{g_K}$ is a convex surface in $\mathcal{N}_h$.
\end{theorem}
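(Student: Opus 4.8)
The plan is to verify the definition of convexity directly: I would produce, in a neighbourhood of $S:=\partial\mathcal{N}_{g_K}$ in $\mathcal{N}_h$, a contact vector field transverse to $S$. By Proposition \ref{lem3} the map $\sigma$ identifies $S$ with $\mathcal{L}\partial K/\real{}_{>0}$, so $S$ is a smooth surface, and condition 1) of Definition \ref{def1} guarantees it is embedded, so that a tubular neighbourhood and the notion of a transverse field make sense. The geometric idea is to view $S$ as one member of the family $S_c:=\partial\mathcal{N}_{g_{K_c}}$, where $K_c:=\{H<c\}$ are the sublevel sets of the defining function $H$, and to realise the infinitesimal deformation $c\mapsto S_c$ by a contact vector field.

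To make this precise I lift the picture to the null cone $\mathcal{L}^{\ast}N\subset T^{\ast}N$. Write $E:=\tfrac12 h^{\ast}(\theta,\theta)$ for the geodesic Hamiltonian, whose flow on $\{E=0\}=\mathcal{L}^{\ast}N$ has the (scaled) null geodesics as orbits, and set $H_0:=\pi^{\ast}H$ and $H_1:=\{H_0,E\}$. A direct computation gives $H_1=h^{\ast}(dH,\theta)$, so the lift $\tilde S$ of $S$ is the common zero locus $\mathcal{L}^{\ast}N\cap\{H_0=0\}\cap\{H_1=0\}$: indeed $H_0=0$ places the geodesic on $\partial K$ and $H_1=0$ forces its tangent into $T\partial K$. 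Differentiating once more along the geodesic flow yields $\{H_1,E\}=\mathrm{Hess}_h(H)(\gamma',\gamma')$, which is exactly the quantity in condition 2) of Definition \ref{def1}. Hence on $\tilde S$ one has $\{H_1,E\}\neq 0$, so the geodesic flow is transverse to $\{H_1=0\}$ near $\tilde S$. By the implicit function theorem this shows that along each null geodesic near $S$ the function $H$ has a single nondegenerate critical point, that the extremal value $F([\gamma]):=H(\gamma(t_0))$ is a well-defined smooth function near $S$ with $S=\{F=0\}$ and $dF\neq 0$, and that the surfaces $S_c=\{F=c\}$ foliate a neighbourhood of $S$. Condition 3) is what makes $F$ single valued, as it forbids a second tangency contributing a competing critical value.

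It then remains to upgrade the transverse direction of this foliation to a genuine contact vector field, i.e. to find a contact Hamiltonian $g$ on $\mathcal{N}_h$ with $dF(X_g)\neq 0$ on $S$; note that the contact Hamiltonian of $F$ itself is tangent to $S$, so $g$ must be chosen differently. The relevant transversality is governed by the bracket $\{H_0,H_1\}=h^{\ast}(dH,dH)=|dH|^2_h$: this is positive over the timelike part $T_K$, where by Proposition \ref{lem3} the surface $S$ is a genuine (regular) double cover and the characteristic foliation is nonsingular, and it vanishes precisely over the lightlike locus $L_K$, where $dH$ is null and the foliation becomes singular. Over the regular part the nonvanishing of $\{H_0,H_1\}$ lets one normalise the deformation field and correct it, within its Reeb component, into a contact field transverse to $S$.

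The hard part will be the behaviour over the lightlike locus $L_K$, where $|dH|^2_h$ degenerates and the naive transverse field breaks down; this is exactly the singular set of the characteristic foliation described in Proposition \ref{lem3}. The plan there is to exploit the normal form provided by condition 2): since $\mathrm{Hess}_h(H)(\gamma',\gamma')\neq 0$ at every tangency, the singularities of the characteristic foliation of $S$ are nondegenerate, and a local analysis identifies their type and sign along $L_K$. Feeding this into Giroux's criterion for convexity, namely the existence of a dividing set transverse to the characteristic foliation (\cite{Giroux1}, \cite{Giroux2}), then yields the global transverse contact field and proves that $S=\partial\mathcal{N}_{g_K}$ is convex. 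I expect the nondegeneracy supplied by condition 2) to be the crucial input throughout, both for the smoothness of the foliation $S_c$ and for controlling the foliation singularities over $L_K$.
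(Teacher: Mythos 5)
Your preparatory analysis---smoothness of $S=\partial\mathcal{N}_{g_K}$ via the Hamiltonians $H_0$, $H_1$, $E$ and their brackets, the family $S_c$, and the role of condition 3) in making $F$ single valued---is sound and runs parallel to the paper's Lemma \ref{lem1} and Proposition \ref{lem3}. But it only delivers smoothness and the location of the singular locus, and already your description of that locus is incorrect: the singularities of the characteristic foliation are not isolated nondegenerate points. By Proposition \ref{lem3} the foliation is singular at $[\gamma]$ precisely when $\gamma$ meets $L_K$, and each point of $L_K$ carries exactly one null tangent direction, so the singular set is a union of embedded curves diffeomorphic to $L_K$ (the circles of singularities of Figure \ref{Abb1}, cf.\ \cite[Example 2.27]{Etnyre}). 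Condition 2) of Definition \ref{def1} expresses nondegeneracy of the tangency of a null geodesic with $\partial K$---at best transverse nondegeneracy of these singular circles---not Morse nondegeneracy of zeros of the foliation, so a local classification of singularities by ``type and sign'' is not available.

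The fatal gap, however, is the final step: you invoke Giroux's criterion but never construct a dividing set, nor exhibit any global feature of the foliation from which one could be extracted, and no local analysis can supply this. Convexity is a global dynamical property of the characteristic foliation; it fails, for instance, for a torus whose characteristic foliation is linear with irrational slope, which is locally as nice as possible. Nor can locally defined transverse contact fields (which exist near every point of every surface) be patched together: the correspondence $g\leftrightarrow X_g$ is $\real{}$-linear but not linear over smooth functions, $X_{\rho g}\neq \rho X_g$, so cutting off with bump functions destroys transversality; this is why your ``correct it within its Reeb component'' step has no mechanism to globalise. The missing ingredient---the actual heart of the paper's proof---is causality: by Lemma \ref{lem2} and Proposition \ref{lem3} the nonsingular leaves of the characteristic foliation project to lightlike curves on $\partial K$, so a time function $\tau$ of the stably causal spacetime $N$, lifted through $\sigma$ to a function $T$ on $S$, is strictly monotone along every leaf. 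This excludes closed and recurrent leaves, makes the foliation gradient-like between the singular circles, and is exactly what allows one to verify the convexity criterion of \cite[Lemma 2.10]{Etnyre}, namely that $f\,d\beta+\beta\wedge df$ is an area form for a suitable function $f$ built from $T$ (equivalently, that suitable regular level sets of $T$ serve as dividing curves). Your bracket and Hessian conditions are purely local along $S$ and cannot distinguish $S$ from a non-convex surface, so the proposal as it stands does not prove the theorem.
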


\begin{qu}
In all known examples of strongly null convex contractible subsets $K$ of a $3$-dimensional globally hyperbolic spacetime, the timelike and lightlike boundary are similar to the one of the open unit ball in $\real{1,2}$ (see Figure \ref{Abb1}).
Therefore one could ask the question if for arbitrary contractible strongly null convex $K$, $T_K$ is diffeomorphic to $\real{}\times S^1$ and $L_K$ consists of two embedded circles that are the boundary of $T_K$.
\end{qu}

\begin{theorem}\label{thm3}
Let $K$ be a strongly null convex subset of a $3$-dimensional globally hyperbolic spacetime.
Assume that $K$ is contractible through causally simple subsets, $T_K$ is diffeomorphic to $\real{}\times S^1$ and that $L_K$ consists of two embedded circles that are the boundary of $T_K$.
Then
$$(\mathcal{N}_{K},\xi_{K})\cong (ST^{\ast}\real{2},\xi_{st}).$$
\end{theorem}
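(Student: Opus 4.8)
The plan is to realise $\overline{\mathcal{N}_K}$, the closure of $\iota_K(\mathcal{N}_K)$ in $\mathcal{N}_h$, as a solid torus with convex boundary, to identify its dividing set, and then to invoke the Giroux--Honda classification of tight contact structures on solid tori in order to compare $\overline{\mathcal{N}_K}$ with the model furnished by a small causally simple ball, for which the statement is already known by the Corollary.

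First I would pin down the topology of the frontier $\partial\mathcal{N}_{g_K}$. By Proposition \ref{lem3} the map $\sigma$ identifies it with $\mathcal{L}\partial K/\real{}_{>0}$, and the locus where the characteristic foliation is non-singular is a double cover of $T_K\cong\real{}\times S^1$, coming from the two future null directions tangent to the Lorentzian part, while over each circle of $L_K$ the two sheets coalesce and over $S_K$ there is nothing. Under the standing hypothesis that $L_K$ consists of the two boundary circles of $T_K$, this double cover is a pair of open annuli glued to one another along both pairs of boundary circles, so $\partial\mathcal{N}_{g_K}$ is the double of an annulus, namely a torus $T^2$. Combined with the description of $\iota_K(\mathcal{N}_K)$ as the open set of null geodesics of $N$ meeting $K$, and with the contractibility of $K$ (which forces $\mathcal{N}_K\simeq S^1$), this identifies $\overline{\mathcal{N}_K}$ as a solid torus with interior $\mathcal{N}_K\cong\real{2}\times S^1$.

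Next, Theorem \ref{prop2} makes $\partial\mathcal{N}_{g_K}$ a convex surface, so there is a transverse contact vector field $Y$ with dividing set $\Gamma_Y$. I would read off $\Gamma_Y$ from the characteristic foliation of Proposition \ref{lem3}: the foliation is singular exactly along the two circles arising from $L_K$ and its regular leaves project to the lightlike curves on $\partial K$, which forces $\Gamma_Y$ to consist of two parallel essential curves whose slope is dictated by the position of $L_K$. Carrying out the same computation for the model $K_0$, a small round causally simple ball for which $\mathcal{N}_{K_0}\cong ST^{\ast}\real{2}$ by the Corollary and Theorem \ref{thm2}, shows that the boundary data of $\overline{\mathcal{N}_K}$ and of $\overline{\mathcal{N}_{K_0}}$ agree.

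The main obstacle is the final step, since the boundary dividing set of a solid torus does not by itself single out the contact structure in its interior: Honda's classification produces a whole list of tight fillings of a torus with a fixed dividing set. Tightness of $\xi_K$ itself is not the issue, as $\mathcal{N}_K$ is an open subset of the universally tight $\mathcal{N}_h\cong ST^{\ast}\Sigma$. Rather, to exclude the non-standard fillings I would exploit the hypothesis that $K$ is contractible through causally simple subsets: the contraction $K_t$ yields a one-parameter family of convex tori $\partial\mathcal{N}_{g_{K_t}}$ joining $\partial\mathcal{N}_{g_K}$ to $\partial\mathcal{N}_{g_{K_0}}$, and the delicate point is to verify that along this family the dividing slope stays constant, so that no bypass is attached which would change the isotopy class of the filling. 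Granting this, the Giroux--Honda uniqueness of the minimal tight filling with the given boundary dividing set produces a contactomorphism $\overline{\mathcal{N}_K}\cong\overline{\mathcal{N}_{K_0}}$ of solid tori with convex boundary; restricting to interiors gives $(\mathcal{N}_K,\xi_K)\cong(\mathcal{N}_{K_0},\xi_{K_0})\cong(ST^{\ast}\real{2},\xi_{st})$, as claimed.
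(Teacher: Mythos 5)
Your outline matches the paper's strategy (solid torus, convex boundary, dividing curves, classification, comparison with a ball), but the step you yourself call ``the delicate point'' is a genuine gap, and the repair you propose cannot be carried out under the stated hypotheses. The contraction of $K$ is only through \emph{causally simple} subsets, not through strongly null convex ones, so for an intermediate set $K_t$ the frontier $\partial\mathcal{N}_{g_{K_t}}$ need not be smooth, let alone a convex torus: the one-parameter family of convex tori along which you want to monitor the dividing slope need not exist. (The paper uses the contraction for exactly one purpose: to pin down the homotopy type of $\mathcal{N}_K$, which retracts to a sky.) More importantly, the non-uniqueness you are trying to neutralize is not actually present here: the two dividing curves are parallel to the circles of singularities, which lie in the homotopy class of the skies and hence are longitudinal, i.e.\ they generate $\pi_1$ of the solid torus. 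For this boundary data Kanda's theorem \cite[Theorem 8.2]{Kanda} (see also \cite{Makar}) gives a \emph{unique} tight contact structure on the solid torus with convex boundary inducing this characteristic foliation, so the ``whole list'' of tight fillings has exactly one entry and no family argument is needed. What genuinely requires proof, and what you assert without argument, is first that no dividing curve is contractible --- the paper rules this out because a contractible dividing curve would force $\xi_h$ to be overtwisted by Giroux's criterion \cite[Theorem 4.8.13]{Geiges}, contradicting $\mathcal{N}_h\cong ST^{\ast}\Sigma$ being tight --- and second that the boundary characteristic foliations (not merely the dividing sets) of $\overline{\mathcal{N}_K}$ and of the model can be made to agree: the paper gets this from Giroux flexibility, perturbing $\partial\mathcal{N}_{B^3}$ inside $\mathcal{N}_{\real{1,2}}$ so slightly that the perturbed solid torus $T$ stays transverse to the contact vector field, hence remains contact star-shaped, so that its interior is $(ST^{\ast}\real{2},\xi_{st})$ by Theorem \ref{thm2}.

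A second gap is topological: you pass from ``$\mathcal{N}_K\simeq S^1$ and $\partial\mathcal{N}_K\cong T^2$'' directly to ``$\overline{\mathcal{N}_K}$ is a solid torus''. That implication is exactly as deep as the $3$-dimensional Poincar\'e conjecture: a priori $\overline{\mathcal{N}_K}$ could be a solid torus connect-summed with a homotopy sphere. The paper fills this in by applying Dehn's Lemma \cite{Bing} to a loop in $\partial\mathcal{N}_K$ that is essential in the boundary but contractible in $\mathcal{N}_h$, producing a compressing disk $D$, and then uses $\pi_2(\mathcal{N}_K)=0$ together with Perelman's resolution of the Poincar\'e conjecture \cite{Perelman1,Perelman2,Perelman3} to recognize the two pieces obtained by splitting along a tubular neighbourhood of $D$ as balls, whence $\mathcal{N}_K\cong D^2\times S^1$. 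Asserting the solid torus structure without this input leaves the very first step of your argument unsupported.
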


\begin{remark}
In all known examples of contractible strongly null convex subsets $K$ of a $3$-dimensional globally hyperbolic spacetime, all connected components of the boundary of $\partial\mathcal{N}_{g_K}$ are diffeomorphic to a convex torus with two circles of singularities as described in \cite[Example 2.27]{Etnyre} (see also Figure \ref{Abb1}).
The assumption that $K$ is contractible is necessary, since in general the topology of $\mathcal{N}_{g_K}$ can differ depending on the topology of $K$:

Consider $(\real{n}\setminus\{0\},g)$, where $g$ is a complete Riemannian metric on $\real{n}\setminus\{0\}$ that coincides with the euclidean metric outside of $B^n_{\frac{1}{2}}(0)$, the (euclidean) ball of radius $\frac{1}{2}$.
Let $(N,h):=(\real{}\times \real{n}\setminus\{0\}, -dt^2+g)$ and $K:=B^{n+1}(0)\setminus (\{0\}\times (-1,1)$.
Then $K$ is strongly null convex in $N$ and the timelike/lightlike part of $\partial K$ are identical to the one of $B^{n+1}(0)$ in Minkowski space.

While the space of null geodesics of $\mathcal{N}_{B^{n+1}(0)}\cong \real{n}\times S^{n-1}$ due to Theorem \ref{thm2}, $\mathcal{N}_K$ is diffeomorphic to $(\real{n}\setminus\{0\})\times S^{n-1}$:
The flow defined in Example \ref{excs} defines a conformal flow on $(N,h)$, i.e. $K$ is conformally star-shaped.
\end{remark}

\begin{prop}\label{prop0}
Let $\abb{i_1}{(M,g_1)}{(\real{1,2},\eta)},\abb{i_2}{(M,g_2)}{(\real{1,2},\eta)}$ be conformal embeddings into Minkowski space.
Assume that $i_1(M)$ and $ i_2(M)$ are strongly null convex.
If there exists a conformal diffeomorphism $\abb{F}{(M,g_1)}{(M,g_2)}$ then there is a contactomorphism $\abb{\phi}{(\mathcal{N}_{\eta},\xi_{\eta})}{(\mathcal{N}_{\eta},\xi_{\eta})}$ such that $\phi(\partial\mathcal{N}_{i_1(M)})=\partial\mathcal{N}_{i_2(M)}$, i.e. the characteristic foliations induced by $\xi_{\eta}$ on $\partial\mathcal{N}_{i_1(M)}$ and $\partial\mathcal{N}_{i_2(M)}$ coincide.
\end{prop}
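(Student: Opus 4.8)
The plan is to turn the conformal equivalence of the two spacetimes into a contactomorphism of $\mathcal{N}_{\eta}$ carrying one boundary surface to the other. First I would package the given data into a single conformal map between the two images in Minkowski space: since $i_1$ and $i_2$ are conformal embeddings and $F$ is a conformal diffeomorphism, the composition $\psi:=i_2\circ F\circ i_1^{-1}$ is a conformal diffeomorphism between the open subsets $i_1(M)$ and $i_2(M)$ of $\real{1,2}$. By the conformal invariance of the space of null geodesics recalled in Section~2, $\psi$ induces a contactomorphism $\Psi\colon(\mathcal{N}_{i_1(M)},\xi_{i_1(M)})\to(\mathcal{N}_{i_2(M)},\xi_{i_2(M)})$ of the interiors. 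Both $i_1(M)$ and $i_2(M)$ are strongly null convex, so by Definition~\ref{def1} the maps $\iota_{i_1(M)}$ and $\iota_{i_2(M)}$ are embeddings into $\mathcal{N}_{\eta}$, and I will regard each $\mathcal{N}_{i_k(M)}$ as an embedded submanifold with boundary $\partial\mathcal{N}_{i_k(M)}$.

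The next, and conceptually central, step is to show that $\Psi$ extends to the boundary as a diffeomorphism $\partial\mathcal{N}_{i_1(M)}\to\partial\mathcal{N}_{i_2(M)}$ preserving the characteristic foliations induced by $\xi_{\eta}$. Here I would invoke Proposition~\ref{lem3}, which identifies $\partial\mathcal{N}_{i_k(M)}$ with $\mathcal{L}\partial(i_k(M))/\real{}_{>0}$ and realises its boundary points as limits of chords of $i_k(M)$ that degenerate to null geodesics tangent to $\partial(i_k(M))$. Since $\psi$ maps the maximal null geodesics of $i_1(M)$ to those of $i_2(M)$ and respects this degeneration — the latter being a conformally invariant feature of the way inextendible null geodesics approach the boundary — the interior diffeomorphism $\Psi$ is proper and extends continuously to a homeomorphism of the boundaries. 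As a contactomorphism it then carries $\xi_{\eta}\cap T\partial\mathcal{N}_{i_1(M)}$ to $\xi_{\eta}\cap T\partial\mathcal{N}_{i_2(M)}$, i.e. it intertwines the two characteristic foliations.

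Finally I would upgrade this boundary matching to a global contactomorphism $\phi$ of $(\mathcal{N}_{\eta},\xi_{\eta})$. By Theorem~\ref{prop2} both $\partial\mathcal{N}_{i_1(M)}$ and $\partial\mathcal{N}_{i_2(M)}$ are convex surfaces, so the germ of $\xi_{\eta}$ near each of them is determined by its characteristic foliation. The convex surface neighbourhood theorem then promotes $\Psi|_{\partial}$ to a contactomorphism between open neighbourhoods of the two surfaces, and the isotopy extension theorem for contactomorphisms extends this to the desired global $\phi$ with $\phi(\partial\mathcal{N}_{i_1(M)})=\partial\mathcal{N}_{i_2(M)}$; being a contactomorphism, $\phi$ then makes the two characteristic foliations coincide.

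The main obstacle is this last extension. A contactomorphism of the two bounded regions does not by itself extend across the complement of $\mathcal{N}_{i_k(M)}$ in $\mathcal{N}_{\eta}\cong ST^{\ast}\real{2}$, so to run the isotopy extension argument I must first produce an ambient isotopy carrying $\partial\mathcal{N}_{i_1(M)}$ to $\partial\mathcal{N}_{i_2(M)}$ through convex surfaces whose characteristic foliations are conjugate under $\Psi|_{\partial}$. Establishing the existence of such an isotopy — for which the convex surface theory of Giroux and Honda and the explicit model $ST^{\ast}\real{2}$ of $\mathcal{N}_{\eta}$ are the natural tools — is where the real work lies; the boundary extension of the second step also requires a careful verification that $\Psi$ is compatible with the collar structure supplied by Proposition~\ref{lem3}.
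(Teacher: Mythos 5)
Your first step coincides with the paper's: form $\psi:=i_2\circ F\circ i_1^{-1}$, a conformal diffeomorphism from $i_1(M)$ to $i_2(M)$, and lift it to a contactomorphism of the interiors. But the step you place at the centre of your argument --- that this interior contactomorphism ``is proper and extends continuously to a homeomorphism of the boundaries'' --- is precisely what needs proof, and your justification does not supply one. The map $\psi$ is defined only on the open set $i_1(M)$, whereas a point of $\partial\mathcal{N}_{i_1(M)}$ is a null geodesic merely tangent to $\partial i_1(M)$, so $\psi$ never acts on it directly; and a diffeomorphism between open subsets in general does \emph{not} extend to the closure. Properness onto the open image only tells you that a sequence leaving every compact subset of $\mathcal{N}_{i_1(M)}$ has all its accumulation points on $\partial\mathcal{N}_{i_2(M)}$; it does not give existence or uniqueness of limits, let alone a continuous, smooth, foliation-preserving boundary map. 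Your appeal to a ``conformally invariant feature of the way inextendible null geodesics approach the boundary'' is circular: it presupposes that $\psi$ behaves tamely at $\partial i_1(M)$, which is exactly the issue. (Also, Proposition \ref{lem3} identifies $\partial\mathcal{N}_K$ with $\mathcal{L}\partial K/\real{}_{>0}$ via tangency; it contains no limiting description by degenerating chords of the kind you invoke.)

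The idea your proposal is missing, and on which the paper's proof rests, is conformal rigidity (Liouville's theorem in conformal geometry, cf. K\"uhnel--Rademacher): since the dimension here is $3$, every conformal map of a connected open subset of $\real{1,2}$ is the restriction of a global M\"obius-type transformation --- a composition of dilatations, translations, elements of $O(1,2)$ and special conformal transformations --- regular away from a closed set of singularities. Hence $\psi$ extends automatically to a conformal diffeomorphism between open neighbourhoods $U\supset i_1(M)$ and $V\supset i_2(M)$ (relative compactness, which is built into strong null convexity, enters here), and this extension lifts to a contactomorphism of neighbourhoods of $\overline{\mathcal{N}_{i_1(M)}}$ and $\overline{\mathcal{N}_{i_2(M)}}$ carrying $\partial\mathcal{N}_{i_1(M)}$ to $\partial\mathcal{N}_{i_2(M)}$: the boundary behaviour you tried to obtain by a limiting argument comes for free from rigidity. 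This also removes your admitted ``main obstacle'': the paper simply extends this neighbourhood contactomorphism to all of $\mathcal{N}_{\eta}$, with no need for the convex-surface machinery (Giroux flexibility plus contact isotopy extension through convex surfaces) that you propose but, as you concede, cannot yet carry out. As written, both the boundary extension and the global extension in your proposal are genuine gaps, and the first one is unlikely to be fillable without invoking the rigidity theorem in some form.
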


\begin{proof}
The map $i_2\circ F\circ i_1^{-1}|_{i_1(M)}$ defines a conformal diffeomorphism from $i_1(M)$ to $i_2(M)$.
As a conformal map between open subsets of $\real{1,2}$, $F$ can be extended up to a closed set of singularities to all of $\real{1,2}$ (see e.g. \cite{Kuhnel, Schottenloher}):
Every conformal map on a connected open subset of $\real{1,2}$ is a composition of dilatations, translations, elements of $O(1,2)$ and maps of the form
$$x\mapsto \frac{x-\eta(x,x)b}{1-2\eta(x,b)+\eta(x,x)\eta(b,b)},$$
where $b\in \real{3}$ is fixed.
Since $i_1(M)$ and $i_2(M)$ are relatively compact, there exist open neighbourhoods $U,V$ of $i_1(M)$ and $i_2(M)$ such that $\abb{F|_{U}}{U}{V}$ is a conformal diffeomorphism.
Hence $F|_{U}$ lifts to a contactomorphism of open neighbourhoods of $\mathcal{N}_{i_1(M)}$ and $\mathcal{N}_{i_2(M)}$ such that $\partial\mathcal{N}_{i_1(M)}$ is mapped to $\partial\mathcal{N}_{i_2(M)}$.
This contactomorphism can be extended to $\mathcal{N}_{\eta}$.
\end{proof}

In section $4$ we construct a Poincar\'{e} return map that serves as an invariant of the characteristic foliation of $\partial\mathcal{N}_{K}$ for certain strongly null convex $K\subset \real{1,2}$.
In many cases this invariant is easy to compute and can be used together with Proposition \ref{prop0} to distinguish conformal classes of subsets of Minkowski space.

\section{proofs}

\subsection{Proof of Theorem \ref{thm2}} 
Let $(N,h)$ be globally hyperbolic and $U\subset (N,h)$ be conformally star-shaped.
Denote with $(\mathcal{N}_U,\xi_U)$ its space of null geodesics.
Assume $(\mathcal{N}_U,\xi_U)$ is an open subset of $(\mathcal{N}_h,\xi_h)$.

Following \cite{Chekanov} define a relatively compact open subset $V\subset \mathcal{N}_{h}$ to be \textit{contact star-shaped} if there exists a complete contact vector field $Y$ (i.e. its flow $\phi^Y_t$ consists of contactomorphisms) such that
\begin{itemize}
\item Every flow line of $Y$ intersects $\partial V$ at most once.
\item $\bigcup\limits_{t\geq 0}\phi^{Y}_t(V)=\mathcal{N}_h$.
\end{itemize}
Denote $V_t=\phi^Y_t(V)$.

To show that $(\mathcal{N}_U,\xi_U)$ is contactomorphic to $(\mathcal{N}_h,\xi_h)$ we follow the proof in \cite[Proposition 3.1]{Chekanov}:

\begin{lemma}
Let $a<b$ and $c<d$ be real constants.
Then there exists a contactomorphism $\abb{\Phi_{a,b}^{c,d}}{\mathcal{N}_h}{\mathcal{N}_h}$ that coincides with $\phi^Y_{c-a}$ on a neighbourhood of $\partial V_a$ and with $\phi^Y_{d-b}$ on a neighbourhood of $\partial V_b$.
\end{lemma}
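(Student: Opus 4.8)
The plan is to realise $\Phi_{a,b}^{c,d}$ as the time-one map of a genuine contact Hamiltonian flow, rather than as a naive time-reparametrisation of $\phi^Y$. First I fix a contact form $\alpha$ for $\xi_h$ and set $h:=\alpha(Y)$, the contact Hamiltonian generating $\phi^Y_t$ with $R_\alpha$ the Reeb field. Since every flow line of $Y$ meets $\partial V$ at most once and $\bigcup_{t\ge 0}\phi^Y_t(V)=\mathcal{N}_h$, the map $(\xi,t)\mapsto\phi^Y_t(\xi)$ identifies a neighbourhood of the region between the relevant level sets with $\partial V\times\real{}$, giving a smooth ``flow-time'' function $u$ with $\partial V_t=\{u=t\}$ and $du(Y)=1$. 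The first thing to record is the obstacle: the obvious guess $\phi^Y_t(\xi)\mapsto\phi^Y_{r(t)}(\xi)$, for a reparametrisation $r$ with $r(t)=t+(c-a)$ near $a$ and $r(t)=t+(d-b)$ near $b$, is in general \emph{not} a contactomorphism, which is exactly why a Hamiltonian construction is forced. I also reduce the problem: writing $\Phi_{a,b}^{c,d}=\phi^Y_{c-a}\circ G$, the required germs become $G=\mathrm{id}$ near $\partial V_a$ and $G=\phi^Y_\delta$ near $\partial V_b$, where $\delta:=(d-b)-(c-a)=a+(d-c)-b$.

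The key computation concerns the autonomous contact Hamiltonian $H:=f(u)\,h$ for a cutoff $f=f(u)$. From $\mathcal{L}_Y\alpha=(dh(R_\alpha))\alpha$ and the defining equations for $X_H$, one checks that $X_H=f(u)\,Y+h\,f'(u)\,W$, where $W\in\xi_h$ is determined by $\iota_W d\alpha|_{\xi_h}=-du|_{\xi_h}$. The crucial observation is that wherever $f$ is \emph{locally constant} the correction term vanishes and $X_H=f(u)\,Y$, so there the flow of $X_H$ is simply a fixed-time map of $\phi^Y$. Choosing $f\equiv 0$ on a neighbourhood of the low levels and $f\equiv\delta$ on a neighbourhood of the $u$-interval swept out from level $b$, the time-one flow of $H$ is the identity near $\partial V_a$ and equals $\phi^Y_\delta$ near $\partial V_b$, i.e. it is the desired $G$; and it is automatically a contactomorphism, being a contact Hamiltonian flow.

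The main point to get right, and the heart of the argument, is that a trajectory of the autonomous field $X_H$ issuing from a point near a boundary must remain inside the region where $f$ is constant; otherwise the correction term reappears and the time-one map fails to be the expected fixed-time flow. Along such a trajectory $\dot u=f$, so a point started near level $b$ sweeps the $u$-interval between $b$ and $b+\delta$, forcing me to arrange $f\equiv\delta$ on that whole interval while keeping $f\equiv 0$ near level $a$, with the two constant regions disjoint so that a single smooth $f$ interpolates between them. This is precisely where the hypothesis $c<d$ is used: since $\min(b,b+\delta)>a$ because $b+\delta=a+(d-c)>a$, the interval swept from level $b$ lies strictly above level $a$ and stays clear of the region where $f\equiv 0$. (Working with $\Phi_{a,b}^{c,d}$ directly would instead impose the two values $c-a$ and $d-b$ on possibly overlapping intervals, which is why passing to $G$ is convenient.) With such an $f$ fixed, $H=f(u)\,h$ extends to a globally smooth function on $\mathcal{N}_h$ (taking its constant values off the shell) with complete flow, so its time-one map $G$ is a global contactomorphism; then $\Phi_{a,b}^{c,d}:=\phi^Y_{c-a}\circ G$ is a global contactomorphism of $\mathcal{N}_h$ coinciding with $\phi^Y_{c-a}$ near $\partial V_a$ and with $\phi^Y_{d-b}$ near $\partial V_b$, as required.
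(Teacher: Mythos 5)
Your overall strategy is the same as the paper's (which follows \cite[Proposition 3.1]{Chekanov}): reduce to a normalised case by splitting off $\phi^Y_{c-a}$, then take a contact Hamiltonian that vanishes near the inner level set and equals a constant multiple of $\alpha(Y)$ near the outer one, and use locality of contact Hamiltonian vector fields plus a confinement argument for trajectories to read off the two germs of the time-one map. Your formula $X_H=f(u)Y+hf'(u)W$ is correct, and the way you use $c<d$ to keep the two constancy regions disjoint is exactly the role this hypothesis plays in the paper; in fact the correction field $W$ is not needed at all, since wherever $H=c\,\alpha(Y)$ for a constant $c$ on an open set, locality and linearity of $F\mapsto X_F$ already give $X_H=cY$ there.

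There is, however, a genuine gap: the smooth flow-time function $u$ on which your cutoff $f(u)$ is built need not exist. Contact star-shapedness only requires $V$ to be a relatively compact \emph{open set} such that every flow line of $Y$ meets $\partial V$ at most once and $\bigcup_{t\geq 0}\phi^Y_t(V)=\mathcal{N}_h$; no smoothness of $\partial V$ and no transversality of $Y$ to $\partial V$ are assumed. Consequently $(\xi,t)\mapsto\phi^Y_t(\xi)$ is only an injective continuous map from $\partial V\times\real{}$, not a diffeomorphism onto a neighbourhood of the shell, and $u$ (though well defined pointwise between the levels) need not be smooth, so $H=f(u)\,h$ need not be smooth. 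This is not a technicality in the intended application: there $V=\mathcal{N}_U$ for a conformally star-shaped $U$, and smoothness of $\partial\mathcal{N}_U$ is precisely what is \emph{not} known at this stage of the paper -- boundary smoothness is proved only later (Lemma \ref{lem1}) and only under the stronger ``strongly null convex'' hypotheses. The repair is exactly the paper's construction, and it costs nothing: you never need $u$, only a smooth interpolation between the two constant regimes. The star-shaped hypotheses imply the nesting $\overline{V_{q_1}}\subset V_{q_2}$ for any $a<q_1<q_2<\min\{b,\,b+\delta\}$, so by a smooth Urysohn function one gets $\abb{\chi}{\mathcal{N}_h}{[0,1]}$ with $\chi\equiv 0$ on $\overline{V_{q_1}}$ and $\chi\equiv 1$ outside $V_{q_2}$; then $H:=\delta\,\chi\,\alpha(Y)$ is globally smooth, its flow is complete (it agrees with $\delta\,\alpha(Y)$ outside a compact set), and your locality and confinement arguments apply verbatim to its time-one map, which is your $G$.
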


\begin{proof}
Assume $a=c$, the general case follows by taking $\Phi_{a,b}^{c,d}=\phi^Y_{c-a}\circ\Phi_{a,d-c+a}^{a,b}$.
Choose a contact form $\alpha$ for $\xi_h$.
Let $a<q_1<q_2<\min\{b,d\}$.
Since $\phi^Y_t$ is complete and $V$ is relatively compact it follows that $\overline{V_{q_1}}\subset V_{q_2}$.
Hence one can find a smooth function $F$ on $\mathcal{N}_h$ such that $F([\gamma])=0$ for $[\gamma]\in V_{q_1}$ and $F([\gamma])=\alpha(Y)$ for $[\gamma]\notin V_{q_2}$.
Then $\Phi_{a,b}^{a,d}$ can be defined as the flow of the contact Hamiltonian vector field of $F$ at time $d-b$.
\end{proof}

\begin{lemma}
Let $V\subset\mathcal{N}_h$ be contact star-shaped.
Then $V$ is contactomorphic to $\mathcal{N}_h$.
\end{lemma}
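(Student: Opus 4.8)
The plan is to exhaust $\mathcal{N}_h$ by the relatively compact sets $V_t = \phi^Y_t(V)$ and build the desired contactomorphism $V \to \mathcal{N}_h$ as a suitable limit of the compactly-supported contactomorphisms $\Phi_{a,b}^{c,d}$ furnished by the previous lemma. Since $Y$ is complete and $\bigcup_{t\ge 0} V_t = \mathcal{N}_h$ with $V$ relatively compact, the family $\{V_t\}_{t\in\mathbb{R}}$ is an increasing (by the nesting $\overline{V_{q_1}}\subset V_{q_2}$ established above) and exhausting family of open sets. First I would fix an increasing sequence $t_0 < t_1 < t_2 < \cdots$ with $t_k \to \infty$, so that $\bigcup_k V_{t_k} = \mathcal{N}_h$, and reduce the problem to patching together contactomorphisms that successively ``stretch'' each annular region $\overline{V_{t_{k+1}}}\setminus V_{t_k}$ outward.

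The key construction is an infinite composition. Using the previous lemma I would choose, for each $k$, a contactomorphism $\Psi_k := \Phi_{a_k,b_k}^{c_k,d_k}$ that equals the identity (or $\phi^Y$ for a bounded time) on a neighbourhood of $V_{t_{k-1}}$ and agrees with some $\phi^Y_{s_k}$ near $\partial V_{t_k}$, arranged so that $\Psi_k$ pushes $\partial V_{t_k}$ onto $\partial V_{t_{k+1}}$ while fixing everything already mapped into place at stage $k-1$. Concretely, I would set $\phi := \lim_{k\to\infty}\Psi_k\circ\Psi_{k-1}\circ\cdots\circ\Psi_1$ restricted appropriately; because each $\Psi_k$ is the identity on the growing region $V_{t_{k-1}}$, the composition stabilizes on every compact subset of $V$, so the limit is a well-defined smooth map that is locally a finite composition of contactomorphisms, hence itself a contactomorphism onto its image. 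The design of the gluing times $(a_k,b_k,c_k,d_k)$ must guarantee both that the images exhaust $\mathcal{N}_h$ (so $\phi$ is surjective) and that $\phi$ remains injective with smooth inverse (by running the same stabilizing-composition argument for the inverse maps $\Psi_k^{-1}$).

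The main obstacle, and the step I would spend the most care on, is verifying that the infinite composition genuinely converges to a \emph{diffeomorphism} onto all of $\mathcal{N}_h$ rather than merely an open embedding with image some proper $V_{t_\infty}$. This is exactly the point where completeness of $Y$ and the exhaustion hypothesis $\bigcup_{t\ge 0}\phi^Y_t(V) = \mathcal{N}_h$ are indispensable: I must show the ``escape to infinity'' speed can be kept bounded below so that the accumulated flow time $\sum s_k$ diverges, forcing the images to cover every point of $\mathcal{N}_h$. I expect the local stabilization (each point eventually lies in the fixed region of all later $\Psi_k$) to be the clean formal heart of the argument, with the global surjectivity and the compatibility of the inverse maps being the technical bookkeeping that one must organize carefully—this is precisely the structure of the proof of \cite[Proposition 3.1]{Chekanov}, which I would follow.
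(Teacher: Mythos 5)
Your overall strategy (following \cite[Proposition 3.1]{Chekanov} and assembling the interpolation contactomorphisms from the previous lemma) is the right one, and you correctly identify the central tension: each point of $V$ must stabilize after finitely many steps, yet the images must exhaust $\mathcal{N}_h$. However, the construction you specify does not resolve this tension; it collapses into exactly the degenerate case you warn about. You work with a \emph{single} sequence $t_0<t_1<\cdots\to\infty$ and demand that $\Psi_k$ be the identity on a neighbourhood of $V_{t_{k-1}}$. But the sets $V_t$ are nested increasing (the previous lemma's proof already uses $\overline{V_{q_1}}\subset V_{q_2}$ for $q_1<q_2$), so $V=V_0\subseteq V_{t_{k-1}}$ whenever $t_{k-1}\geq 0$. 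Thus if $t_0\geq 0$, every $\Psi_k$ fixes $V$ pointwise, and the limit of $\Psi_k\circ\cdots\circ\Psi_1|_V$ is the inclusion $V\hookrightarrow\mathcal{N}_h$: an open embedding with image $V$, not a surjection. Starting with finitely many negative $t_k$ does not help: every $\Psi_k$ still fixes $V_{t_0}$ pointwise, so surjectivity would force the single shell $V\setminus V_{t_0}$ to be spread over all of $\mathcal{N}_h\setminus V_{t_0}$ by maps whose behaviour is prescribed only near the spheres $\partial V_{t_{k-1}}$, $\partial V_{t_k}$ and is otherwise unconstrained --- in fact the construction in the previous lemma allows every $\Psi_k$ to be chosen equal to the identity on a neighbourhood of $\overline{V}$, in which case $\phi|_V=\mathrm{id}$. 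Your proposed remedy via divergence of the accumulated flow times $\sum s_k$ misses the point: the prescribed behaviour $\phi^Y_{s_k}$ of $\Psi_k$ lives near $\partial V_{t_k}$, a region that points of $V$ never reach, precisely because the earlier maps keep fixing them.

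The missing ingredient is a \emph{second} sequence exhausting the domain $V$ from inside, and with it the limiting process disappears entirely. Since $V$ is open, every $p\in V$ satisfies $\phi^Y_t(p)\in V$ for small $t>0$, i.e.\ $p\in V_{-t}$; hence $\bigcup_{s<0}V_s=V$. The paper therefore takes $s_n$ strictly increasing and negative, $r_n\to\infty$, and defines $\phi$ \emph{piecewise}: $\phi:=\phi^Y_{r_1-s_1}$ on $V_{s_1}$ and $\phi:=\Phi^{r_n,r_{n+1}}_{s_n,s_{n+1}}$ on each shell $V_{s_{n+1}}\setminus V_{s_n}$. The boundary clause of the previous lemma is precisely the gluing condition: near $\partial V_{s_{n+1}}$ both the $n$-th and the $(n+1)$-st pieces coincide with the same flow map $\phi^Y_{r_{n+1}-s_{n+1}}$, so $\phi$ is smooth; each piece is the restriction of a global contactomorphism taking $\partial V_{s_n}$ to $\partial V_{r_n}$ and $\partial V_{s_{n+1}}$ to $\partial V_{r_{n+1}}$, hence takes its domain shell onto the target shell $V_{r_{n+1}}\setminus V_{r_n}$. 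The result is a contactomorphism from $\bigcup_n V_{s_n}$ onto $\bigcup_n V_{r_n}=\mathcal{N}_h$ (with $s_n\to 0$ the domain is $V$ itself; a choice like the paper's $s_n\to -1$ costs only one extra composition with the contactomorphism $\phi^Y_{-1}$). If you prefer your infinite-composition packaging, it can be repaired, but only by tying the identity region of the $k$-th map to the target exhaustion $V_{r_k}$ (what is ``already in place'') while its stretching acts on the image of the next \emph{domain} shell $V_{s_{k+1}}\setminus V_{s_k}$ --- that is, by reintroducing the interior exhaustion $s_n\nearrow 0$ that your write-up omits.
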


\begin{proof}
Let $(s_n), (r_n)$ be strictly increasing sequences such that $s_n\rightarrow -1$ and $r_n\rightarrow \infty$.
Due to the previous Lemma a contactomorphism from $V$ to $\mathcal{N}_h$ is given by 
$$\phi([\gamma])=\left\lbrace \begin{array}{ccc}
\phi^Y_{r_1-s_1}([\gamma]) & \text{for } [\gamma]\in V_{s_1} \\
& \\ 
\Phi^{r_n,r_{n+1}}_{s_n,s_{n+1}}([\gamma]) & \text{for } [\gamma]\in V_{s_{n+1}}\setminus V_{s_n}
\end{array}\right.. $$ 
\end{proof}

\begin{lemma}
Let $U\subset (N,h)$ be conformally star-shaped such that $\mathcal{N}_U\subset\mathcal{N}_h$ is open.
Then $\mathcal{N}_U$ is contact star-shaped in $\mathcal{N}_h$.
\end{lemma}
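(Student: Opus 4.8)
The plan is to produce, from the conformal data of $U$, a complete contact vector field $Y$ on $\mathcal{N}_h$ that witnesses contact star-shapedness of $\mathcal{N}_U$. The natural candidate is the lift of the complete conformal Killing field $X$ on $N$ coming from the definition of \emph{conformally star-shaped}. First I would recall that a conformal diffeomorphism of $N$ preserves the images of null geodesics, hence induces a diffeomorphism of $\mathcal{N}_h$, and that this induced map is a contactomorphism because the contact structure $\xi_h$ is the projection of $\ker(\lambda|_{\mathcal{L}^{\ast}N})$ and the Liouville kernel is conformally invariant (as already used in the paper for $\mathrm{Prop}$~\ref{prop0}). Differentiating the one-parameter group $\phi^X_t$ of conformal diffeomorphisms therefore yields a complete contact vector field $Y$ on $\mathcal{N}_h$ whose flow $\phi^Y_t$ is exactly the family of induced contactomorphisms; completeness of $Y$ is inherited from completeness of $X$.

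Next I would verify the two defining properties of a contact star-shaped set for $V:=\mathcal{N}_U$ under this $Y$. For the covering property, I would use $\bigcup_{t\geq 0}\phi^X_t(U)=N$: since $\phi^X_t(U)$ is again an open causally simple subset whose null geodesics extend into $N$, and since $\iota$ identifies $\mathcal{N}_{\phi^X_t(U)}$ with $\phi^Y_t(\mathcal{N}_U)=V_t$, the exhaustion of $N$ by the $\phi^X_t(U)$ should push forward to an exhaustion of $\mathcal{N}_h$ by the $V_t$, giving $\bigcup_{t\geq 0}\phi^Y_t(V)=\mathcal{N}_h$. The key point is that every null geodesic of $N$ eventually has a representative lying inside some $\phi^X_t(U)$, which follows from the corresponding statement about points in $N$ together with the fact that a null geodesic is determined by any short unparametrised arc.

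For the at-most-once intersection property, I would translate the condition that each flow line of $X$ meets $\partial U$ at most once into the statement that each flow line of $Y$ meets $\partial V=\partial\mathcal{N}_U$ at most once. Here a flow line $t\mapsto\phi^Y_t([\gamma])$ corresponds to the one-parameter family of null geodesics $\phi^X_t(\gamma)$, and $[\gamma]\in\partial\mathcal{N}_U$ should correspond precisely to $\gamma$ being tangent to or first meeting $\partial U$; I expect this to be the main obstacle, since one must check that a flow line of $Y$ crosses $\partial V$ exactly when the corresponding conformally moved null geodesic transitions between lying inside $U$ and leaving $U$, and that the ``at most once'' condition on flow lines of $X$ in $N$ really does preclude a second crossing at the level of geodesics rather than points. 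I would handle this by analysing, for a fixed null geodesic $\gamma$, the set of parameters $t$ for which $\phi^X_t(\gamma)$ is contained in $U$, showing that this set is an interval using convexity of $U$ along flow lines of $X$, so that its boundary in $t$ contributes at most one transversal intersection of the $Y$-flow line with $\partial V$. Assembling these three verifications shows $\mathcal{N}_U$ is contact star-shaped, and combining with the preceding two lemmas completes the proof that $(\mathcal{N}_U,\xi_U)\cong(\mathcal{N}_h,\xi_h)$.
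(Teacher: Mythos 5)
Your skeleton matches the paper's: lift the complete conformal Killing field $X$ to a complete contact vector field $\mathcal{X}$ on $\mathcal{N}_h$ (conformal diffeomorphisms induce contactomorphisms of the space of null geodesics), and obtain the exhaustion $\bigcup_{t\geq 0}\phi^{\mathcal{X}}_t(\mathcal{N}_U)=\mathcal{N}_h$ from the pointwise exhaustion of $N$, exactly as you describe. The gap is in the step you yourself flag as the main obstacle, and your proposed fix would fail as stated. You plan to show that the set $I_\gamma=\{t:\phi^X_t(\gamma)\text{ meets } U\}$ is an interval and conclude that the flow line of $\mathcal{X}$ through $[\gamma]$ meets $\partial\mathcal{N}_U$ at most once. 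But an interval is not enough: if $I_\gamma$ were a bounded interval $(a,b)$, the flow line would lie in $\partial\mathcal{N}_U$ at both $t=a$ and $t=b$, i.e.\ two intersections. What you need is that $I_\gamma$ is empty, all of $\mathbb{R}$, or a half-line, and this cannot come from the at-most-once property of $X$ alone --- there is no ``convexity of $U$ along flow lines'' hypothesis in the definition, and the at-most-once condition by itself does not exclude, say, flow lines that enter and leave $U$-related regions in opposite ways for different points of $\gamma$. Furthermore, restricting attention to ``transversal'' intersections is not what the definition of contact star-shaped requires: the flow line could touch $\partial\mathcal{N}_U$ at a time outside $\overline{I_\gamma}$ without ever entering $\mathcal{N}_U$, and such touches also have to be excluded.

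The missing idea is the interplay of the \emph{two} hypotheses on $X$, which the paper exploits in a single pointwise contradiction. Since $U$ is relatively compact and $\mathcal{N}_U$ is open, any $[\gamma]\in\partial\mathcal{N}_U$ satisfies $\gamma\cap U=\emptyset$ but $\gamma\cap\partial U\neq\emptyset$. Suppose $[\gamma]$ and $\phi^{\mathcal{X}}_{t_0}([\gamma])=[\phi^X_{t_0}\circ\gamma]$ both lie in $\partial\mathcal{N}_U$ for some $t_0>0$. Pick $q\in\phi^X_{t_0}(\gamma)\cap\partial U$ and set $p:=\phi^X_{-t_0}(q)\in\gamma$. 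The covering property applied to $q$ gives $s\geq 0$ with $\phi^X_{-s}(q)\in U$, and $s\neq 0$ because $q\in\partial U$; since the flow line through $q$ meets $\partial U$ only at $q$, the backward open ray $\{\phi^X_{-r}(q):r>0\}$ is connected and disjoint from $\partial U$, hence lies entirely in $U$ (it contains $\phi^X_{-s}(q)$). In particular $p\in U$, contradicting $\gamma\cap U=\emptyset$. Note how the covering property is what upgrades ``meets $\partial U$ at most once'' to ``lies in $U$ at all times before meeting $\partial U$''; that is exactly the half-line statement your interval analysis is missing, and it simultaneously rules out second crossings and tangential touches. If you prove this statement first (using both hypotheses), your plan goes through and becomes essentially the paper's proof.
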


\begin{proof}
Since conformal maps between spacetimes lift to contactomorphisms of their spaces of null geodesics, the conformal Killing vector field $X$ defines a contact vector field $\mathcal{X}$ on $\mathcal{N}_h$ with a complete flow.
Obviously 
$$\bigcup\limits_{t\geq 0}\phi^{\mathcal{X}}_t(U)=\mathcal{N}_h.$$
Assume there exists a $[\gamma]\in\partial\mathcal{N}_U$ and $t_0>0$ such that $\phi^{\mathcal{X}}_{t_0}([\gamma])\in \partial \mathcal{N}_U$.
Then one can find a point $p$ on $\gamma$ such that $\phi^X_{t_0}\in\partial U$.
But this implies that $p\in U$, i.e. $[\gamma]\in \mathcal{N}_U$ contradicting $[\gamma]\in\partial\mathcal{N}_U$.
This shows that every non constant flow line intersects $\partial\mathcal{N}_U$ at most once.
\end{proof}

\subsection{Proof of Proposition \ref{lem3} and Theorem \ref{prop2}}

\begin{lemma}\label{lem1}
Let $ (N,h)$ be a globally hyperbolic spacetime and $K\subset N$ be strongly null convex.
Then $\partial\mathcal{N}_K\subset\mathcal{N}_h$ is smooth. 
\end{lemma}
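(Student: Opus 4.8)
The plan is to realise $\partial\mathcal{N}_K$ as the image, under the quotient map $\abb{i}{\mathcal{L}^{\ast}N}{\mathcal{N}_h}$, of a smooth submanifold of $\mathcal{L}^{\ast}N$ cut out by the conditions expressing tangency to $\partial K$, and to let the nondegeneracy hypothesis 2) of Definition \ref{def1} supply the required transversality. First I would reduce to a local picture near $\partial K$. By condition 1) the map $\iota_K$ is an embedding, so $\mathcal{N}_K$ is identified with the open subset of $\mathcal{N}_h$ consisting of the null geodesics of $N$ that meet the open set $K$. Writing $\partial K=H^{-1}(c)$ with $K$ locally equal to $\{H<c\}$, a geodesic lies in $\overline{\mathcal{N}_K}\setminus\mathcal{N}_K$ exactly when it meets $\partial K$ without entering $K$; since a geodesic crossing $\partial K$ transversally passes from $\{H<c\}$ to $\{H>c\}$ and hence enters $K$, such a boundary geodesic must be tangent to $\partial K$ and remain in $\{H\ge c\}$ near the contact point. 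Thus $\partial\mathcal{N}_K$ consists of the \emph{grazing} geodesics: those tangent to $\partial K$ at a point where $H\circ\gamma$ attains a local minimum equal to $c$.

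To make this precise I would lift to $\mathcal{L}^{\ast}N$, working over a neighbourhood of $\partial K$ where $H$ is defined. For $\theta\in\mathcal{L}^{\ast}N$ let $\gamma_\theta$ be the null geodesic whose initial velocity is metrically dual to $\theta$, and let $\Psi_s$ be the geodesic flow, generated by $X_G$ with $G(\theta)=\tfrac12 h^{\ast}(\theta,\theta)$. Set $\mathfrak h(\theta):=H(\pi(\theta))$, so that
\[
\tfrac{d}{ds}\Big|_{0}\mathfrak h(\Psi_s\theta)=dH(\gamma_\theta')=:g_2(\theta),\qquad
\tfrac{d^2}{ds^2}\Big|_{0}\mathfrak h(\Psi_s\theta)=\mathrm{Hess}_h(H)(\gamma_\theta',\gamma_\theta'),
\]
the second identity using $\nabla_{\gamma'}\gamma'=0$. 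Tangency of $\gamma_\theta$ to $\partial K$ at $\pi(\theta)$ is then the pair of conditions $\mathfrak h(\theta)=c$ and $g_2(\theta)=0$, defining a set $P\subset\mathcal{L}^{\ast}N$. The crucial point is that $P$ is cut out transversally: along $P$ one has $X_G(\mathfrak h-c)=g_2=0$ while $X_G g_2=\mathrm{Hess}_h(H)(\gamma',\gamma')\ne 0$ by condition 2), and $d(\mathfrak h-c)=\pi^{\ast}dH\ne 0$ on $T\mathcal{L}^{\ast}N$ since $H$ is regular and $\pi|_{\mathcal{L}^{\ast}N}$ is a submersion. Evaluating a putative linear relation $a\,d(\mathfrak h-c)+b\,dg_2=0$ on the tangent vector $X_G$ forces $b=0$ and then $a=0$, so $d(\mathfrak h-c)$ and $dg_2$ are independent on $T\mathcal{L}^{\ast}N$ at every point of $P$, whence $P$ is a smooth submanifold of codimension two.

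It remains to descend $P$ to $\mathcal{N}_h$. The function $\mathfrak h$ is invariant and $g_2$ is homogeneous of degree one under the Liouville scaling, so $P$ is invariant under the $\real{}_{>0}$-action, while $X_G g_2\ne 0$ shows the geodesic flow is transverse to $P$; thus $P$ is a local slice for the geodesic flow and $di$ restricted to $P$ has kernel exactly the Liouville direction, making $P/\real{}_{>0}\to\mathcal{N}_h$ an immersion. By condition 3) each grazing geodesic is tangent to $\partial K$ at a unique point, so distinct geodesics occupy distinct Liouville orbits in $P$ and the immersion is injective; since $\partial K$ is compact, $P/\real{}_{>0}$ is compact and $\mathcal{N}_h$ is Hausdorff, so this injective immersion is an embedding. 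Finally, $\mathrm{Hess}_h(H)(\gamma',\gamma')$ is nonzero and hence of locally constant sign on $P$, and $\partial\mathcal{N}_K$ is the open-and-closed subset $P^{+}/\real{}_{>0}$ on which this sign is the one forcing $\gamma$ to stay outside $K$ near the contact point; restricting to $P^{+}$ gives the asserted smoothness.

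The step I expect to be the main obstacle is the descent rather than the transversality computation: one must verify that $i|_{P^{+}}$ is a genuine embedding with closed image (here conditions 1), 3) and the relative compactness of $K$ all enter), and that the "grazing with local minimum'' description really captures the topological boundary $\partial\mathcal{N}_K$ exactly, with no boundary geodesics lost through higher-order or non-unique tangencies — which is precisely where the nondegeneracy in condition 2) and the uniqueness in condition 3) are indispensable.
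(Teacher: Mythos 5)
Your proof is correct, but it takes a genuinely different route from the paper's. The paper works downstairs in $\mathcal{N}_h$: it defines $G([\gamma]):=\sup_{p\in\gamma}H(p)$, proves that $G$ is smooth near $\partial\mathcal{N}_K$ --- the technical heart of the argument, where in a local chart $\mathcal{N}_V\cong ST\Sigma$ the tangency time is a smooth function of the geodesic by the Hessian condition, and condition 3) of Definition \ref{def1} is used to guarantee that this local maximum of $H$ along the geodesic actually computes the supremum --- then checks $dG\neq 0$ by a Jacobi-field variation and concludes that $\partial\mathcal{N}_K=G^{-1}(0)$ is smooth by the implicit function theorem. You instead work upstairs in $\mathcal{L}^{\ast}N$: the tangency locus $P$ is cut out as a regular level set (your independence computation for $d(\mathfrak{h}-c)$ and $dg_2$, via the geodesic-flow derivative $(0,\mathrm{Hess}_h(H)(\gamma',\gamma'))$, is precisely the computation the paper performs later, in the proof of Proposition \ref{lem3}, to show that $\mathcal{L}\partial K$ is smooth), and you then descend through the Liouville quotient, using condition 3) for injectivity and compactness of $P/\real{}_{>0}$ together with Hausdorffness of $\mathcal{N}_h$ to upgrade the injective immersion to an embedding. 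In effect you prove the smoothness part of Proposition \ref{lem3} first and deduce Lemma \ref{lem1} from it via the identification $\sigma$, reversing the paper's logical order. Each approach buys something: the paper's sup-function needs no quotient or descent argument and is reused in parametrized form in Corollary \ref{prop1}, while your route avoids the delicate verification that a supremum along geodesics is smooth and yields the diffeomorphism $\partial\mathcal{N}_K\cong\mathcal{L}\partial K/\real{}_{>0}$ as a by-product. Finally, the set-theoretic identification of $\partial\mathcal{N}_K$ with the outward-grazing geodesics, which you flag as the main obstacle, is glossed over in the paper as well (``Clearly $\gamma_0$ is tangent to $\partial K$\dots'', ``Clearly $\partial\mathcal{N}_K=G^{-1}(0)$''); the three verifications you list --- a boundary geodesic meets $\overline{K}$ by relative compactness and continuity of the geodesic flow, it cannot re-enter $K$ elsewhere by the intermediate value theorem and condition 3), and every outward-grazing geodesic is a limit of geodesics through $K$ obtained by pushing the tangency point inward --- all go through, so there is no gap there.
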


\begin{proof}
By definition $\mathcal{N}_K$ is an open subset of $\mathcal{N}_h$, i.e it is a smooth manifold.

Let $\abb{H}{N}{[-1,1]}$ be smooth and regular near $\partial K=H^{-1}(0)$.
Assume that $\mathrm{Hess}_h(H)(v,v)\neq 0$ for all $v\in \mathcal{L}N\cap T\partial K$ and that $H(K)>0$ and $H(N\setminus \overline{K})<0$.
Choose $\epsilon>0$ such that $U:=H^{-1}((-\epsilon,\epsilon))$ is a tubular neighbourhood of $\partial K$ and $\mathrm{Hess}_h(H)(v,v)\neq 0$ for all $v\in\mathcal{L}U$ with $dH(v)=0$.

Claim: The map
\begin{align*}
&\abb{G}{\mathcal{N}_h}{[-1,1]}\\
&[\gamma]\mapsto \sup\limits_{p\in \gamma}H(p)
\end{align*}
is smooth near $\partial\mathcal{N}_K$.

Denote by $\mathcal{N}_h(U)$ the set of null geodesics in $N$ intersecting $U$.
Note that $\mathcal{N}_U$ is in general not the space of null geodesics of $U$.
This is only the case if the map $\abb{\iota_U}{\mathcal{N}_{h|_U}}{\mathcal{N}_h}$ induced by the inclusion map is an embedding.
Since $K$ is relatively compact, for $[\gamma]\in\mathcal{N}_h(U)$ there exists a $t_0$ such that $H(\gamma(t_0))=G([\gamma])$.
Note that at $\gamma(t_0)$ one has 
$$dH(\gamma'(t_0))=\frac{d}{dt}|_{t=t_0}H(\gamma(t))=0=h(\mathrm{grad}_h(H),\gamma'(t_0)),$$
i.e. $\gamma'(t_0)$ is tangent to $H^{-1}(c)$ for some $c\in(-\epsilon,\epsilon)$.

Let $[\gamma_0]\in \partial\mathcal{N}_K$.
Clearly $\gamma_0$ is tangent to $\partial K$ at a point $\gamma_0(t_0)$.
Moreover due to property $3$) in Definition \ref{def1} $H(\gamma_0(t_0))$ is the unique global maximum of $H$ along $\gamma_0$, i.e. $G([\gamma_0])=H(\gamma_0(t_0))=0$.
Take a globally hyperbolic open neighbourhood $V\subset U$ around $\gamma(t_0)$.
Then $\mathcal{N}_V\cong ST\Sigma$, where $\Sigma$ is a Cauchy hypersurface in $V$.
W.l.o.g assume that $\gamma(t_0)\in \Sigma$ and the orthogonal projection of $\gamma'(t_0)$ to $\Sigma$ lies in $ST\Sigma$.

Choose $\delta>0$ such that $\gamma_v(t)\in U$ for all $v\in ST\Sigma$ and $t\in (-\delta,\delta)$, where $\gamma_v$ denotes the geodesic induced by $v\in ST\Sigma$.
Define a smooth function
\begin{align*}
&\abb{f}{(-\delta,\delta)\times ST\Sigma}{\real{}}\\
&(t,v)\mapsto h(\mathrm{grad}_h(H),\gamma_v'(t)).
\end{align*}

For a non-trivial smooth family $t_s\subset (-\delta,\delta)$ one has
$$\frac{d}{ds}|_{s=0}f(t_s,v)=\frac{d}{ds}|_{s=0}(t_s)\mathrm{Hess}_h(H)(\gamma_v'(t_0),\gamma_v'(t_0))\neq 0.$$
Hence $df_{(t,v)}\neq 0$ for all $(t,v)\in (-\delta,\delta)\times ST\Sigma$.
Therefore $f^{-1}(0)$ is a smooth co-dimension $1$ submanifold of $(-\delta,\delta)\times ST\Sigma$ containing $\gamma_0'(t_0)$.
Note that property $2$) in the definition of strongly null convex ensures that $f^{-1}(0)$ is transverse to the $(-\delta,\delta)$ component near $\gamma'(t_0)$.

Let $[\gamma_s]$ be a smooth family of null geodesics with $[\gamma_0]=[\gamma]$.
For $s$ close to $0$, there exists a smooth function $\abb{\tau}{\real{}}{\real{}}$ such that $\gamma_s'(\tau(s))$ intersects $f^{-1}(0)$, i.e. $H(\gamma_s(\tau(s)))$ is a local maximum.
Suppose there exists $t_s'\neq \tau(s)$ such that $H(\gamma_s(t'_s))$ is the global maximum of $H\circ\gamma_s$ for all $s\neq 0$.
Then $H(\gamma_s(t'_s))$ converges to $0$ for $s\rightarrow 0$ since $G([\gamma_0])=0$.
Property $3$) in Definition \ref{def1} implies $t'_s\rightarrow t_0$.
This contradicts the transversality of $f^{-1}(0)$ to the $(-\delta,\delta)$ component near $\gamma'(t_0)$.
Therefore for all $s$ close to $0$ $H(\gamma_s(\tau(s)))$ is a global maximum.

It follows that $G$ is smooth near $[\gamma_0]$.

Clearly $\partial \mathcal{N}_K =G^{-1}(0)$.
Moreover $dG\neq 0$ in a neighbourhood around $\partial \mathcal{N}_K$:
Let $[\gamma]\in\partial  \mathcal{N}_K$ with $\gamma(t_0)\in \partial K$.
To show that $dG\neq 0$ near $\partial K$, consider a family of null geodesics $\gamma_s$ with $\gamma_0=\gamma$ such that $G([\gamma_s])=H(\gamma(t_s))$.
Then
\begin{align*}
dG\left(\frac{d}{ds}\mid_{s=0} [\gamma_{s}]\right)&=\frac{d}{ds}\mid_{s=0} H(\gamma_s(t_s))=dH(J(t_0))\frac{d}{ds}\mid_{s=0}(t_s),
\end{align*}
where $J(t)$ denotes the Jacobi field along $\gamma_0$ generated by $\gamma_s$.
Choosing a family of null geodesics such that $J(t_0)$ is transverse to $\partial K$, it follows that $dG\neq 0$ near $\partial K$.
Using the implicit function theorem, this shows that $\partial  \mathcal{N}_K$ is smooth.
\end{proof}

\begin{cor}\label{prop1}
Let $(N,h)$ be globally hyperbolic.
Let $\abb{i}{M\times[0,1]}{N}$ be an isotopy of embeddings such that $\partial i_s(M)$ is strongly null convex.
Assume there exists an isotopy $\abb{H}{[0,1]\times N}{\real{}}$ such that $\partial i_s(M)=H_s^{-1}(0)$, $H_s$ is regular near $\partial i_s(M)$ and  $\mathrm{Hess}_h(H_s)(v,v)\neq 0$ for all $v\in \mathcal{L}\partial i_s(M)$.
Then $\mathcal{N}_{i_0^{\ast}h}$ is diffeomorphic to $\mathcal{N}_{i_1^{\ast}h}$.
\end{cor}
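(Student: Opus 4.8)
The plan is to realise the closed regions $\overline{\mathcal{N}_{i_s(M)}}$, $s\in[0,1]$, as the fibres of a proper submersion of manifolds with boundary and to apply Ehresmann's fibration theorem. First I would reduce to a statement inside $\mathcal{N}_h$. Since each $i_s(M)$ is strongly null convex, property $1$) of Definition \ref{def1} makes $\iota_{i_s(M)}$ an embedding, so $\mathcal{N}_{i_s(M)}$ is an open subset of $\mathcal{N}_h$ whose boundary $\partial\mathcal{N}_{i_s(M)}$ is smooth by Lemma \ref{lem1}. Because $i_s$ is an isometric, hence conformal, embedding of $(M,i_s^{\ast}h)$, it induces a diffeomorphism $\mathcal{N}_{i_s^{\ast}h}\cong\mathcal{N}_{i_s(M)}$, so it suffices to produce a diffeomorphism $\mathcal{N}_{i_0(M)}\cong\mathcal{N}_{i_1(M)}$.

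The technical core is a parametrised version of Lemma \ref{lem1}. I would set
$$\abb{\tilde G}{[0,1]\times\mathcal{N}_h}{\real{}},\qquad \tilde G(s,[\gamma])=\sup_{p\in\gamma}H_s(p),$$
and rerun the proof of Lemma \ref{lem1} with $s$ as an extra parameter. The joint smoothness of the isotopy $H$ together with the condition $\mathrm{Hess}_h(H_s)(v,v)\neq 0$ on $\mathcal{L}\partial i_s(M)$ shows that the auxiliary function $(s,t,v)\mapsto h(\mathrm{grad}_h(H_s),\gamma_v'(t))$ has nonvanishing differential, so the maximising tangency of $\gamma$ with $\partial i_s(M)$ stays non-degenerate and varies smoothly with $(s,[\gamma])$; property $3$) of strong null convexity again excludes a competing global maximum as $s$ varies. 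Hence $\tilde G$ is smooth near the boundary track
$$\mathcal{S}:=\{(s,[\gamma]):[\gamma]\in\partial\mathcal{N}_{i_s(M)}\}=\tilde G^{-1}(0),$$
with fibrewise differential $d_{\mathcal{N}_h}\tilde G$ nowhere vanishing along $\mathcal{S}$. Consequently $\mathcal{S}$ is a smooth hypersurface of $[0,1]\times\mathcal{N}_h$, and the region $\mathcal{R}:=\{(s,[\gamma]):[\gamma]\in\overline{\mathcal{N}_{i_s(M)}}\}$, whose interior $\{(s,[\gamma]):[\gamma]\in\mathcal{N}_{i_s(M)}\}$ is open and which near $\mathcal{S}$ equals $\{\tilde G\geq 0\}$, is a smooth manifold with boundary $\partial\mathcal{R}=\mathcal{S}$; moreover $d_{\mathcal{N}_h}\tilde G\neq 0$ lets one lift $\partial_s$, so both $\mathcal{R}\to[0,1]$ and $\mathcal{S}\to[0,1]$ are submersions.

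Finally I would invoke compactness. Each $\overline{\mathcal{N}_{i_s(M)}}$ is contained in the set of null geodesics of $N$ meeting the compact set $\overline{i_s(M)}$, which is itself compact; as the parameter interval is compact this makes $\mathcal{R}$ a compact manifold with boundary, so $\pi\colon\mathcal{R}\to[0,1]$ is a proper submersion and $\pi|_{\partial\mathcal{R}}$ is a submersion. Ehresmann's fibration theorem for manifolds with boundary then trivialises $\pi$, and the resulting fibre diffeomorphism $\overline{\mathcal{N}_{i_0(M)}}\cong\overline{\mathcal{N}_{i_1(M)}}$ carries boundary to boundary, hence interior to interior, giving $\mathcal{N}_{i_0(M)}\cong\mathcal{N}_{i_1(M)}$ and, with the reduction above, the claim. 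The main obstacle is the parametrised smoothness of the second paragraph: one must make the non-degeneracy of the maximising tangency and the absence of spurious maxima uniform in $s\in[0,1]$, which is exactly where the joint smoothness of $H$ and the compactness of the parameter interval enter. Once $\mathcal{S}$ and $\mathcal{R}$ are known to be smooth, the fibration argument is routine.
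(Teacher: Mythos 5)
Your proposal is correct in substance and shares its technical core with the paper: both arguments rest on the parametrised version of Lemma \ref{lem1}, i.e.\ the joint smoothness in $(s,[\gamma])$ of $G(s,[\gamma])=\sup_{p\in\gamma}H_s(p)$ near the boundary track, with non-vanishing fibrewise differential, so that the boundaries $\partial\mathcal{N}_{i_s(M)}$ form a smooth family of compact hypersurfaces in $\mathcal{N}_h$. Where you diverge is the endgame. The paper first uses compactness of $[0,1]$ to reduce to a short parameter interval, then moves the compact boundary $\partial\mathcal{N}_{i_0(M)}$ onto $\partial\mathcal{N}_{i_\epsilon(M)}$ by a (normalised) gradient-flow isotopy of the lifted defining functions, and finally invokes the isotopy extension theorem \cite[Chapter 8, Theorem 1.3]{Hirsch} to obtain a compactly supported ambient diffeotopy carrying one region onto the other; you instead assemble the whole family into the manifold with boundary $\mathcal{R}\subset[0,1]\times\mathcal{N}_h$ and apply Ehresmann's fibration theorem to the projection $\mathcal{R}\to[0,1]$. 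The two devices are essentially equivalent (Ehresmann is proved by the same lifted-flow technique), and your packaging is arguably cleaner since it avoids subdividing the interval; but it is also slightly more demanding, because Ehresmann needs properness of $\mathcal{R}\to[0,1]$, i.e.\ compactness of $\mathcal{R}$, whereas the paper only ever uses compactness of the boundaries $\partial\mathcal{N}_{i_s(M)}$. Your justification of properness glosses one point: slicewise relative compactness of $i_s(M)$ together with compactness of $[0,1]$ does not formally imply relative compactness of $\bigcup_{s}\overline{i_s(M)}$ when $M$ is noncompact (a shear-type isotopy can push thinner and thinner collars of $M$ arbitrarily far while keeping every slice bounded), so the uniformity in $s$ should be extracted explicitly, e.g.\ from the continuity of $H$ near the boundary track or by first restricting to short parameter intervals exactly as the paper does; this is a repairable gloss of the same order as those already present in the paper's own proof, not a fatal gap.
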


\begin{proof}
Since $[0,1]$ is compact, it suffices to show that $\mathcal{N}_{i_0^{\ast}h}$ is diffeomorphic to $\mathcal{N}_{i_{\epsilon^{\ast}h}}$ for some $\epsilon>0$.
Let $U$ be a neighbourhood of $\partial\mathcal{N}_{i_0(M)}$.
Choose $\epsilon>0$ such that $\partial\mathcal{N}_{i_{s}(M)}\subset U$ for all $s\in [0,\epsilon]$.
Recall that $\mathcal{N}_h(U)$ denotes the set of null geodesics intersecting $U$.
Assume that for every $s\in[0,\epsilon]$,  $H_s(i_s(M)\cap U)>0$ and $H(U\setminus \overline{i_s(M)})<0$.
Define the map
\begin{align*}
&\abb{G}{[0,1]\times \mathcal{N}_h(U)}{\real{}}\\
&(s,[\gamma])\mapsto \sup\limits_{p\in\gamma}H_s(p).
\end{align*}

The proof of Lemma \ref{lem1} implies that $G_s$ is smooth for every $s\in[0,\epsilon]$.
Moreover $G$ is smooth in $s$ since $H$ is smooth in $s$.

Let $g$ be a Riemannian metric on $N$.
An isotopy between $\partial i_0(M)$ and $\partial i_{\epsilon}(M)$ is given by $\Phi_s|_{\partial i_0(M)}$, where $\abb{\Phi_s}{U}{U}$ denotes the projection of the time-$s$ map of the time dependent vector field $\mathrm{grad}_g(H_s)$ to $U$.
Since $\partial i_s(M)$ is compact \cite[Chapter 8,Theorem 1.3]{Hirsch} implies that $\Phi_{\epsilon}|_{\partial i_0(M)}$ extends to a diffeotopy with compact support on $N$.
This diffeotopy maps $i_0(M)$ diffeomorphically onto $i_{\epsilon}(M)$.
\end{proof}

Next we want to analyse the characteristic foliation of the boundary of $\mathcal{N}_{K}$ for some strongly null convex $K\subset N$.

First note that for any spacetime $(M,g)$ the contact structure in $\mathcal{N}_g$ has an interpretation in terms of Jacobi fields as explained in \cite{Low} and \cite{Baut}:
Let  $[J]\in T\mathcal{N}_g$ and $(-\epsilon,\epsilon)\ni s\mapsto[\gamma_s]$ be a curve in $\mathcal{N}_g$ with $\frac{d}{ds}|_{s=0}[\gamma_s]=[J]$.
For every smooth choice of representatives $\gamma_s$ defines a variation of null geodesics.
Hence $\frac{d}{ds}|_{s=0}\gamma_s$ defines a Jacobi field along $\gamma_0$.
Changing the representatives of $[\gamma_s]$ this Jacobi field changes by a term parallel to $\gamma_0'$.
Thus $[J]$ can be identified with an equivalence class of Jacobi fields along $\gamma_0$ defined by a variation of null geodesics and differing by a term parallel to $\gamma_0'$.
Furthermore since $[J]$ is defined by geodesic variations containing only null geodesics one has $g(\frac{\nabla}{dt}J,\gamma_0')=0$ which implies that $g(J,\gamma_0')$ is constant along $\gamma_0$.
The contact structure on $\mathcal{N}_g$ can be written as
$$\xi_g([\gamma])=\{[J]\in T_{[\gamma]}\mathcal{N}_g|g(J,\gamma')=0\}.$$
Note that due to the observations above this expression is well defined and does not depend on the choice of representatives.

Since $K$ is strongly null convex, $\partial\mathcal{N}_{g_K}$ is determined by the null vectors tangent to $\partial K$, i.e. $[\gamma]\in \partial\mathcal{N}_{g_K}$ if and only if $\gamma'$ is tangent to $\partial K$ at a unique point.

\begin{lemma}\label{lem2}
The characteristic foliation of $\partial\mathcal{N}_{g_K}$ is determined by the equivalence classes of  Jacobi fields $[J]$ with initial conditions
\begin{align*}
J(0)&\in ((\gamma'(0))^{\perp}\cap T\partial K)\\
\frac{\nabla}{dt}J(0)&\in (\gamma'(0))^{\perp},
\end{align*}
where $J\in [J]\in T_{[\gamma]}\partial\mathcal{N}_K$ and $(\gamma'(0))^{\perp}:=\{v\in T_{\gamma(0)}N|h(v,\gamma'(0))=0\}$.
\end{lemma}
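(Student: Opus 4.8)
The plan is to describe both the tangent space $T_{[\gamma]}\partial\mathcal{N}_{g_K}$ and the contact hyperplane $\xi_h([\gamma])$ through the Jacobi field picture recalled above and then to read off their intersection, which by definition is the characteristic distribution. Recall that a tangent vector $[J]\in T_{[\gamma]}\mathcal{N}_h$ is represented by a Jacobi field $J$ along $\gamma$ arising from a variation through null geodesics, so that $h(\frac{\nabla}{dt}J,\gamma')=0$ holds automatically; here $[J]$ is taken modulo the Jacobi fields $\gamma'$ and $t\gamma'$, and the contact hyperplane is $\xi_h([\gamma])=\{[J]:h(J,\gamma')=0\}$, using that $h(J,\gamma')$ is constant along $\gamma$. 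In particular the second asserted condition $\frac{\nabla}{dt}J(0)\in(\gamma'(0))^{\perp}$ is nothing but this null-variation condition, hence is met by every element of $T_{[\gamma]}\mathcal{N}_h$; its only role is to pin down which Jacobi fields occur without a priori reference to $\mathcal{N}_h$.

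To compute $T_{[\gamma]}\partial\mathcal{N}_{g_K}$ I would use the smooth defining function $G([\gamma])=\sup_{p\in\gamma}H(p)$ from the proof of Lemma \ref{lem1}, for which $\partial\mathcal{N}_{g_K}=G^{-1}(0)$. By that lemma each nearby geodesic attains its $H$-maximum at a unique, smoothly varying parameter $\tau$, characterised by $dH(\gamma'(\tau))=0$; normalise $\gamma$ so that this maximum sits at $t=0$, which is exactly the tangency point of $\gamma$ with $\partial K$. For a variation $\gamma_s$ representing $[J]$, writing $\tau(s)$ for the location of the maximum one gets
\begin{align*}
dG([J])&=\frac{d}{ds}\Big|_{s=0}H(\gamma_s(\tau(s)))=dH\big(J(0)+\tau'(0)\gamma'(0)\big)=dH(J(0)),
\end{align*}
the reparametrisation term dropping out because $dH(\gamma'(0))=0$ at the maximum. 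Hence $T_{[\gamma]}\partial\mathcal{N}_{g_K}=\ker dG=\{[J]:dH(J(0))=0\}=\{[J]:J(0)\in T_{\gamma(0)}\partial K\}$, the tangency being read off from $\partial K=H^{-1}(0)$.

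Intersecting this with the contact condition $h(J(0),\gamma'(0))=0$ then yields
$$(\xi_h\cap T\partial\mathcal{N}_{g_K})_{[\gamma]}=\{[J]:J(0)\in(\gamma'(0))^{\perp}\cap T_{\gamma(0)}\partial K\},$$
which together with the automatic condition $\frac{\nabla}{dt}J(0)\in(\gamma'(0))^{\perp}$ is exactly the stated characterisation. It remains to verify that both conditions are well defined on the class $[J]$: replacing $J$ by $J+a\gamma'+bt\gamma'$ changes $J(0)$ by $a\gamma'(0)$ and $\frac{\nabla}{dt}J(0)$ by $b\gamma'(0)$, and since $\gamma'(0)$ is null and tangent to $\partial K$ it lies in both $(\gamma'(0))^{\perp}$ and $T_{\gamma(0)}\partial K$, so the conditions are preserved. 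A dimension count, $(n-1)+n-2=2n-3$, confirms this is the expected rank of the characteristic distribution.

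The step I expect to be the main obstacle is the first-order computation of $dG$, and specifically the fact that $dG$ depends only on $J(0)$ and not on $\frac{\nabla}{dt}J(0)$ or higher-order data. This is what collapses the a priori two tangency requirements — base point on $\partial K$ and velocity tangent to $\partial K$ — into the single codimension-one condition cutting out $\partial\mathcal{N}_{g_K}$, and it relies entirely on the tangency point being a critical point of $H\circ\gamma$, so that $dH(\gamma'(0))=0$ annihilates the reparametrisation term, together with the smooth dependence of the maximiser $\tau(s)$ that the transversality argument in Lemma \ref{lem1} provides.
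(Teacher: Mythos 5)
Your proof is correct, and its skeleton --- describing both $\xi_h([\gamma])$ and $T_{[\gamma]}\partial\mathcal{N}_{g_K}$ through equivalence classes of Jacobi fields and intersecting the two --- is the same as the paper's. The genuine difference lies in how the tangent space to the boundary is computed. The paper identifies $T_{[\gamma]}\partial\mathcal{N}_{g_K}$ directly with classes of Jacobi fields arising from variations through null geodesics \emph{tangent} to $\partial K$ (using that, by strong null convexity, $\partial\mathcal{N}_{g_K}$ consists exactly of the null geodesics tangent to $\partial K$, cf.\ the map $\sigma$ of Proposition \ref{lem3}), and reads off $J(0)\in T_{\gamma(0)}\partial K$ from the motion of the tangency points; you instead realise $\partial\mathcal{N}_{g_K}$ as $G^{-1}(0)$ for the defining function $G$ of Lemma \ref{lem1} and compute $\ker dG$, observing that the reparametrisation term $\tau'(0)\,dH(\gamma'(0))$ vanishes because the tangency point is a critical point of $H\circ\gamma$. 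Your route is somewhat more self-contained: it does not presuppose the smooth identification of boundary points with tangent null directions, it makes explicit why only $J(0)$ (and not $\frac{\nabla}{dt}J(0)$) enters the linearisation, and your check that the conditions are invariant under $J\mapsto J+(a+bt)\gamma'$ is left implicit in the paper; the paper's route is shorter and more geometric. One small slip: your closing dimension count $(n-1)+(n-2)=2n-3$ counts the Jacobi fields themselves, not their equivalence classes; the characteristic distribution consists of classes modulo the $2$-dimensional span of $\gamma'$ and $t\gamma'$, so its generic rank is $2n-5$, which equals $1$ for the surface $\partial\mathcal{N}_{g_K}$ in the $3$-manifold $\mathcal{N}_h$. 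Since this count is only a sanity check, it does not affect the validity of the argument.
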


\begin{proof}
Let $\gamma$ be a null geodesic with $\gamma'(0)\in T\partial K$.
As described above the tangent space $T_{[\gamma]}\partial\mathcal{N}_{g_K}$ can be identified with equivalence classes of Jacobi fields along $\gamma$ that arise from variations of null geodesics tangent to $\partial K$.
Thus an equivalence class in this tangent space is uniquely determined by a Jacobi field  $J$ with the initial conditions $J(0)\in T_{\gamma(0)}\partial K$ and $\frac{\nabla}{dt}J(0)\in (\gamma'(0))^{\perp}$.
Since $h(J,\gamma')$ is constant along $\gamma$ one has $J\in \xi_{\eta}\cap T_{[\gamma]}\partial\mathcal{N}_{K}$ if and only if $J(0)\in (\gamma'(0))^{\perp}$.
\end{proof}

\begin{remark}
If $K$ is not relatively compact, there can be null geodesics in $\partial\mathcal{N}_K$ that do not intersect $\overline{K}$.
Consider $(N,h)=(\real{}\times \real{}\times S^{1},-dt^2+g_C)=(\real{}\times \real{}\times S^{1},-dt^2+ds^2+d\theta^2)$, where $\theta$ denotes the angle of a point in $S^1\subset \real{2}$ with respect to the vector $(1,0)$.
Let $K=\real{}\times (-1,1) \times S^{1}$.
Then $K$ is causally simple since $(-1,1) \times S^{1}$ is geodesically convex in $(\real{}\times S^{1},g_C)$.
Moreover $\mathcal{N}_{g_K}$ is an open subset of $\mathcal{N}_h$.
The null geodesics in $N$ are up to parametrisation of the form $(s,\gamma(s))$, where $\gamma$ is a geodesic for $(\real{}\times S^{1},g_C)$.
Every point in $(\real{}\times S^{1},g_C)$ lies on two closed geodesics, all other geodesics are complete and intersect $(-1,1) \times S^{1}$.
Thus all null geodesics in $N$ intersect $\overline{K}$ except for the geodesics starting outside of $\overline{K}$ such that $\gamma(s)$ is closed. 
Hence $\overline{\mathcal{N}_{g_K}}=\mathcal{N}_h$.
\end{remark}

\begin{proof}[Proof of Proposition \ref{lem3}]

We first show that the set $\mathcal{L}\partial K$ of future pointing null vectors tangent to $\partial K$ is smooth.
Let $H$ be defined like in the proof of Lemma \ref{lem1}.
Consider the map
\begin{align*}
&\abb{F}{\mathcal{L}N}{\real{2}}\\
&v\mapsto (H(\pi(v)),dH(v)),
\end{align*}
where $\abb{\pi}{\mathcal{L}N}{N}$ denotes the projection map.
Then $\mathcal{L}\partial K=F^{-1}(0)$.
Let $v\in \mathcal{L}\partial K$ and $\gamma_v$ be the geodesic with $\gamma'(0)=v$.
Then 
$$\frac{d}{dt}|_{t=0}(H(\pi(\gamma_v'(t)),dH(\gamma_v'(t)))=(0,\mathrm{Hess}_h(H)(v,v))\neq 0.$$
If $\gamma_s$ is a variation of null geodesics with $\gamma_s'(0)=v$ such that $\frac{d}{ds}|_{s=0}\gamma_s(0)$ is transverse to $\partial K$ one has
$$\frac{d}{ds}|_{s=0}H(\pi(\gamma_s'(0)))=dH(\frac{d}{ds}|_{s=0}\gamma_s(0))\neq 0.$$
This implies that $dF$ is surjective near $\mathcal{L}\partial K$, i.e. $\mathcal{L}\partial K$ and therefore $\mathcal{L}\partial K/\real{}_{>0}$ are smooth.
Let $[\gamma]\in \partial\mathcal{N}_K$ with $\gamma(0)\in \partial K$.
Take a globally hyperbolic neighbourhood  $V$ around $\gamma(0)$ with Cauchy hypersurface $\Sigma$ such that  $\gamma(t_0)\in \Sigma$ and the part of $\gamma'(t_0)$ tangent to $\Sigma$ lies in $ST\Sigma$.
Then $\mathcal{N}_V\cong ST\Sigma$ is an open neighbourhood of $[\gamma]$.
The map $\sigma$ is the restriction to $\partial\mathcal{N}_K\cap\mathcal{N}_V$ of the diffeomorphism from $\mathcal{N}_V$ to $ST\Sigma$.
Hence the maps $\sigma$ is a diffeomorphism and $\sigma\circ \pi$ is smooth.

Recall that the boundary of $K$ can be divided into the timelike part $T_K$, consisting of the points where $h|_{\partial K}$ is a Lorentzian metric, the spacelike part $S_K$ where $h|_{\partial K}$ is Riemannian and the remaining null part $L_K$.
Since $\partial K$ is $2$-dimensional, for every $p\in T_K$ there are two independent lightlike directions in $T_P\partial K$.
Moreover there exists one lightlike direction if $p\in L_K$ and no lightlike direction if $p\in S_K$.
Let $[J]\in T_{[\gamma]}\partial \mathcal{N}_{g_K}$ for some $[\gamma]$ with $\gamma(0)\in\partial K$.
Due to Lemma \ref{lem2} $[J]\in \xi_{[\gamma]}$ if and only if $J(0)\in (\gamma'(0))^{\perp}\cap T\partial K$.
Hence $[\gamma]$ is a singular point of the characteristic foliation  if and only if $\gamma(0)\in L_K$ since then $T_{\gamma(0)}\partial K=(\gamma'(0))^{\perp}$.
If $Y\in \Gamma(T\partial\mathcal{N}_{g_K})$ defines the characteristic foliation and $[\gamma]$ is not singular, $Y_{[\gamma]}$ projects to the tangent vector of a lightlike curve through $\gamma(0)$ on $\partial K$.
\end{proof}

\begin{proof}[Proof of Theorem \ref{prop2}]
Since $N$ is in particular stably causal, one can choose a smooth time function $\abb{\tau}{N}{\real{}}$ with spacelike level-sets such that $\tau(\overline{K})>0$, i.e. $\tau$ is a function with future pointing timelike gradient (see \cite{Sanchez}).
Due to Proposition \ref{lem3} $\tau|_{\partial K}$ can be lifted to a smooth function $\abb{T}{\partial \mathcal{N}_{K}}{\real{}}$ using the map $\abb{\sigma}{\mathcal{N}_K}{\mathcal{L}\partial K}$.
Choose a contact form $\alpha$ for $\xi_h$ and denote by $\beta$ the restriction of $\alpha$ to $T\partial\mathcal{N}_{g_K}$.

Claim: $\tau$ can be chosen such that $Fd\beta+\beta\wedge dF$ is a volume form.
Then \cite[Lemma 2.10]{Etnyre} implies that $\partial\mathcal{N}_{K}$ is convex.

At the singularities of the characteristic foliation one has $\beta=0$ and $d\beta\neq 0$ since $\alpha$ is a contact form.
Hence in a neighbourhood $U$ around the singularities the claim is always true.

Let $V\subset \partial\mathcal{N}_{K}$ be open such that $\partial\mathcal{N}_{K}=V\cup U$ and $V$ does not contain singularities.
Choose $Y_{1},Y_2\in\Gamma(TV)$ so that $Y_1$ spans the characteristic foliation and $Y_2$ is tangent to the level sets of $T$ and $\beta(Y_2)=2$.
Note that $Y_1$ and $Y_2$ are transverse since they project to a lightlike and spacelike vector field.
Assume that $T$ is strictly increasing along the flow of $Y_1$.

Then
\begin{align*}
Td\beta(Y_1,Y_2)+(\beta\wedge dT)(Y_1,Y_2)&=Td\beta(Y_1,Y_2)+dT(Y_1)\\
&=-T\beta([Y_1,Y_2])+dT(Y_1).
\end{align*}
Therefore
\begin{align*}
Td\beta+\beta\wedge dT>0\Leftrightarrow &\frac{dT(Y_1)}{T}=d(\log(T))(Y_1)>\beta([Y_1,Y_2])
\end{align*}
and since $\partial\mathcal{N}_K$ is compact and $Y_1$ vanishes near the singularities
$$\beta([Y_1,Y_2])<c<\infty.$$
The vector field $Y_2$ only depends on the level sets of $\tau$, not on the value.
Thus $\tau$ can be rescaled without changing the level sets such that $T\beta([Y_1,Y_2])< dT(Y_1)$.

Due to Lemma \ref{lem2} the function $T$ is strictly increasing along the leaves of the characteristic foliation.
In particular, there are no closed leaves since $N$ is causal and a closed leaf would project to a smooth closed lightlike curve in $\partial K$.

\end{proof}

\subsection{Proof of Theorem \ref{thm3}}

\begin{figure}
	\centering
	\includegraphics[scale=0.293]{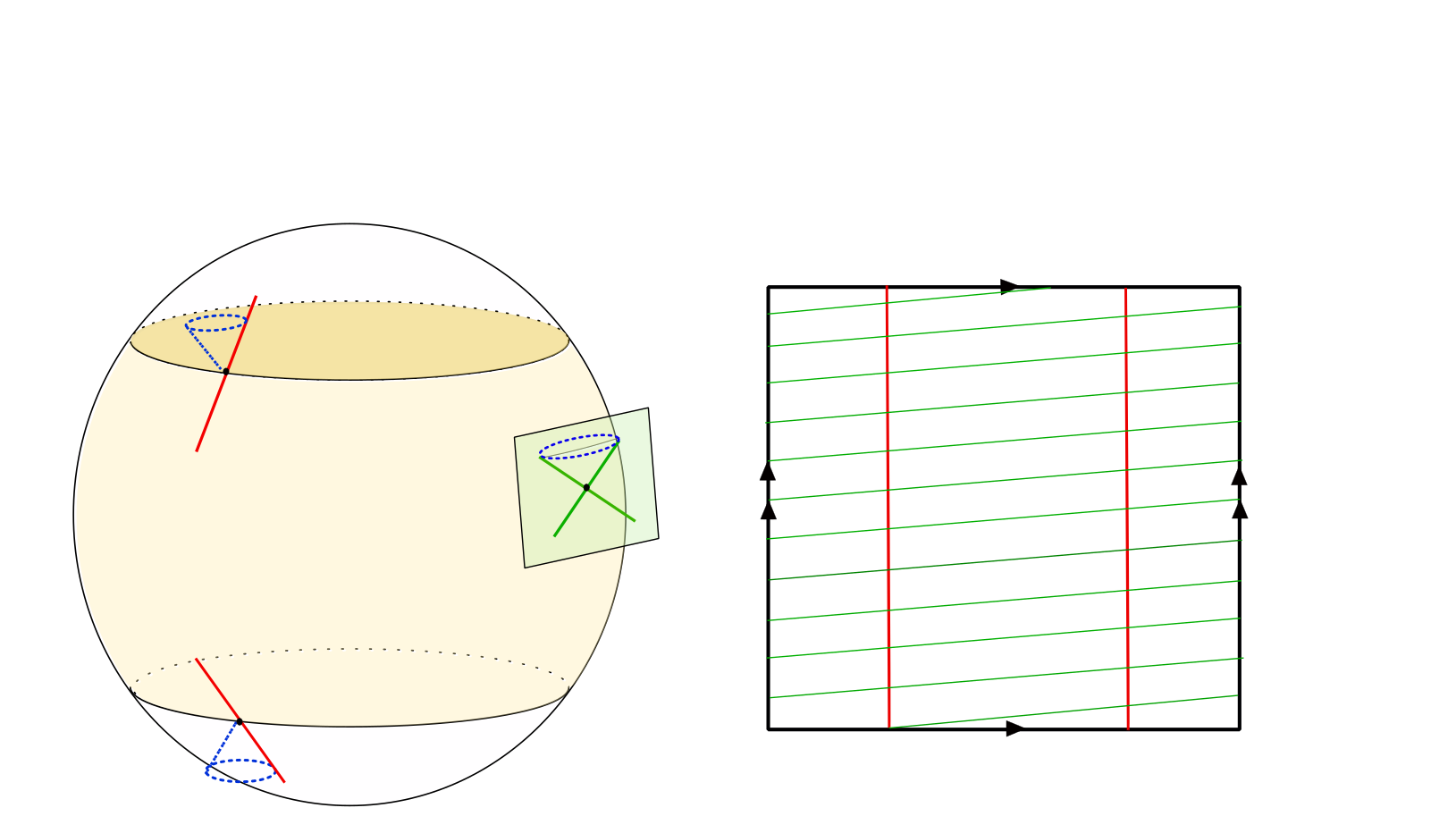}
	\caption{Null geodesics at the boundary of a convex ball $K$ in Minkowski $3$-space and the characteristic foliation of $\partial\mathcal{N}_{K}$.}
	\label{Abb1}
\end{figure}

Claim: $\mathcal{N}_{K}$ is diffeomorphic to $D^2\times S^1$.

Since $K$ is strongly null convex Proposition \ref{lem3} and Theorem \ref{prop2} imply that $\partial \mathcal{N}_{K}$ is a convex torus with two circles of singularities.
The contraction of $K$ through causally simple subsets induces a contraction of $\mathcal{N}_{K}$ to the sky of a point in $K$.
Thus there exists a loop in $\partial \mathcal{N}_{K}$ not null homologous that is contractible in $\mathcal{N}_h$.
Dehn's Lemma (see e.g. \cite[Chapter XVI]{Bing}) implies that there is an embedded disk $D\subset \mathcal{N}_{g_K}$ such that $\partial D\subset \partial \mathcal{N}_{K} $ is homotopic to that loop.
Note that since $\mathcal{N}_{K}$ contracts to a circle, $\pi_2(\mathcal{N}_{K})=0$.
Take a tubular neighbourhood $U\cong D\times (-1,1)$ of $D$ in $\mathcal{N}_{K}$.
Then $\partial (\mathcal{N}_{K}\setminus U)$ and $\partial U$ are homeomorphic to a sphere.
Since $\pi_2(\mathcal{N}_K)=0$ the Poincar\'{e} conjecture proven by Perelman (see \cite{Perelman1, Perelman2, Perelman3}) implies that $U$ and $\mathcal{N}_{K}\setminus U$ are diffeomorphic to a ball.

It follows that $\overline{\mathcal{N}_{K}}$ is obtained by gluing two closed balls at two disjoint discs on their boundary, i.e. $\mathcal{N}_K\cong D^2\times S^1$.

Theorem \ref{prop2} implies that $\partial\mathcal{N}_{K}$ is a convex torus.
The characteristic foliation has two circles of singularities that lie in the homotopy class of the skies, i.e. they are in the homotopy class of a generator of $\pi_1(\partial\mathcal{N}_{K})$.
Given a transverse contact vector field $Y$, the number of circles in $\Gamma_Y$  has to be even and there is no leaf of the characteristic foliation connecting to circles in $\Gamma_Y$ (see \cite{Geiges}).
Moreover a contractible dividing curve would imply that the contact structure on $\mathcal{N}_h$ is overtwisted (see \cite[Theorem 4.8.13]{Geiges}) which contradicts $\mathcal{N}_h$ being contactomorphic to the standard tight contact structure on a spherical co-tangent bundle.
This implies that there exist exactly two dividing circles parallel to the circles of singularities.

 \cite[Theorem 8.2]{Kanda} states that there is a unique contact structure on the solid torus whose boundary is convex and has this characteristic foliation.
 
Let $B^3(0)$ be the open unit ball in $\real{1,2}$.
Then $\partial\mathcal{N}_{B^3}$ is a convex torus divided by the same curves as $\partial\mathcal{N}_{K}$.
Due to Theorem \ref{thm2} $\mathcal{N}_{B^3}$ is contact star-shaped in $\mathcal{N}_{\real{1,2}}$ for a contact vector field $Y$.
By definition $Y$ is transverse to $\partial\mathcal{N}_{B^3}$.
The Giroux flexibility theorem in \cite{Giroux1} allows to isotope $\partial\mathcal{N}_{B^3}$ to a solid torus $T\subset \mathcal{N}_{\real{1,2}}$ such that $\partial T$ has the same characteristic foliation as $\partial \mathcal{N}_{K}$.
Moreover $T$ can be chosen such that $\partial T$ is in an arbitrary small neighbourhood around $\partial\mathcal{N}_{B^3}$.
Then $Y$ is also transverse to $\partial T$ and $T$ is contact star-shaped in $\mathcal{N}_{\real{1,2}}$, i.e.
$$(T,\xi_{\real{1,2}}|_T)\cong ( ST^{\ast}\real{2},\xi_{st}).$$
Together with \cite[Theorem 8.2]{Kanda} this finishes the proof of Theorem \ref{thm3}.

\begin{remark}
The techniques of the proof only work if $(N,h)$ is $3$-dimensional.
One major difficulty is to determine the diffeomorphism type of $\mathcal{N}_K$.
Methods that are recently developed in \cite{Kwon} or \cite{Honda2} could be used to generalise Theorem \ref{thm3} to higher dimensions.
\end{remark}

\section{The characteristic foliation and conformal classes}

In this section we construct a Poincar\'{e} return map for the characteristic foliation on $\partial \mathcal{N}_K$ for strongly null convex subsets $K\subset \real{1,2}$.
The conjugacy class of this map can be used by applying Proposition \ref{prop0} to distinguish different conformal classes on $K$.

For this section assume that $K$ satisfies the conditions of Theorem \ref{thm3}, i.e. the timelike part of $\partial K$ is diffeomorphic to $\real{}\times S^1$ and the lightlike part of $\partial K$ consists of two embedded circles that are the boundary of the timelike part in $\partial K$.

\begin{lemma}\label{lem4}
Let $Y$ be a non vanishing vector field on $\mathbb{T}^2$ and let $(\Gamma_{\theta})_{\theta\in S^1}$ be a foliation by smoothly embedded circles transverse to $Y$.
Then the flow of $Y$ defines a smooth Poincar\'{e} return map on each leaf of the foliation.
The return map of every leaf is in the same (smooth) conjugacy class of circle maps.
\end{lemma}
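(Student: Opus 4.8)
The plan is to use the transversality of $Y$ to the foliation to convert the leaf-parameter $\theta$ into a strictly monotone ``clock'' along the flow, and then to exploit the flow of $Y$ itself as the conjugating map between the return maps on different leaves. First I would record the structure supplied by the foliation: since the leaves $\Gamma_\theta$ are circles foliating the closed surface $\mathbb{T}^2$, they are the fibers of a smooth circle fibration $\abb{p}{\mathbb{T}^2}{S^1}$ with $p^{-1}(\theta)=\Gamma_\theta$, where I identify $S^1=\real{}/\Z$. Transversality of $Y$ to the foliation means precisely that $dp(Y)\neq 0$ at every point; since $\mathbb{T}^2$ is connected $dp(Y)$ has constant sign, which I may take to be positive, and by compactness there is a constant $\epsilon>0$ with $dp(Y)\geq\epsilon$ everywhere. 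Consequently, along any flow line $t\mapsto\phi^Y_t(x)$ a lift $\tilde\theta(t)$ of $\theta(\phi^Y_t(x))$ satisfies $\tilde\theta'(t)=dp(Y)\geq\epsilon$, so the clock $\tilde\theta$ is strictly increasing and advances by every real amount in finite time. As the flow is complete ($\mathbb{T}^2$ being compact), each flow line returns to its starting leaf $\Gamma_{\theta_0}$ exactly when $\tilde\theta$ has increased by an integer, the first return occurring at clock-advance $1$.

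Next I would construct the return map. Fixing $\theta_0$ and $x\in\Gamma_{\theta_0}$, let $\tau(x)$ be the smallest $t>0$ with $\tilde\theta(t)-\tilde\theta(0)=1$; this is the first return of the flow line to $\Gamma_{\theta_0}$. Because $\tilde\theta'=dp(Y)>0$, the defining equation satisfies the hypotheses of the implicit function theorem, so $\tau$ depends smoothly on $x$, and $P_{\theta_0}(x):=\phi^Y_{\tau(x)}(x)$ is a smooth self-map of $\Gamma_{\theta_0}$. Its inverse is the first-return map of the backward flow, so $P_{\theta_0}$ is a diffeomorphism of the circle $\Gamma_{\theta_0}$.

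Finally, to compare the return maps on two leaves $\Gamma_{\theta_0}$ and $\Gamma_{\theta_1}$, I would use the holonomy of $Y$. Choosing the representative with $\theta_1-\theta_0=a\in(0,1)$, define $\abb{h}{\Gamma_{\theta_0}}{\Gamma_{\theta_1}}$ by flowing each $x$ forward until it first meets the leaf $\Gamma_{\theta_1}$, i.e. until the clock first advances by $a$; transversality again makes the hitting time smooth, and the reverse holonomy furnishes a smooth inverse, so $h$ is a diffeomorphism. The conjugacy identity $h\circ P_{\theta_0}=P_{\theta_1}\circ h$ then follows by tracking the net clock-advance along the single flow line through $x$: both compositions flow along this line until $\tilde\theta$ has increased from $\theta_0$ by exactly $1+a$ and both land on $\Gamma_{\theta_1}$; since the clock is strictly monotone, equal clock-value and equal leaf on a common flow line force equality of the endpoints. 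Hence $P_{\theta_0}$ and $P_{\theta_1}$ are smoothly conjugate, and as this holds for any pair of leaves, all return maps belong to a single smooth conjugacy class.

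I expect the genuine obstacle to lie in the existence step rather than the conjugacy: one must ensure that the flow actually returns to each leaf, which is exactly where transversality (the constant sign of $dp(Y)$), compactness (the uniform lower bound $\epsilon$), and completeness of the flow all combine to make $\tilde\theta$ a monotone clock running to $+\infty$. Once this clock is in hand, the smoothness of the return time $\tau$ and the conjugacy relation reduce to a routine use of the implicit function theorem together with the bookkeeping of clock-advances along a common trajectory.
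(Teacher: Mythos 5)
Your proof is correct and follows essentially the same route as the paper: transversality plus compactness of $\mathbb{T}^2$ gives a uniform positive lower bound on $d\theta(Y)$ along flow lines, making the leaf parameter a strictly monotone clock that guarantees finite return times, and the flow itself conjugates the return maps of different leaves. The only cosmetic difference is that the paper normalizes $Y$ so that $d\theta(Y)=1$ (making the conjugating map a fixed-time flow map), whereas you use the variable-time holonomy maps between leaves --- these are the same maps, so the arguments coincide.
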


\begin{proof}
Write $p\in \mathbb{T}^2$ as $p=(\Gamma_{\theta_p}(\phi_p),\theta_p)$, where $\theta_p,\phi_p\in S^1$.
One has to show that for fixed $\theta_0\in S^1 $ and $p_0\in \Gamma_{\theta_0}$ the flow of $Y$ returns to $\Gamma_{\theta_0}$ in finite time.
Define
\begin{align*}
&\abb{f_0}{\real{}}{[0,1]}\\
&t\mapsto \theta_{\Phi_t^Y(p_0)},
\end{align*}
where $\Phi_t^Y$ denotes the flow of $Y$ and we identify $S^1\cong [0,1]/\{0,1\}$.
Since $S^1$ is compact and $Y$ is always transverse to $\Gamma_{\theta}$ one has 
$$f'_0(t)=d\theta(Y(\Phi^Y_t(p_0))>c>0.$$
Hence there exists a $t_0<\infty$ such that $f_p(t_0)=f_p(0)$.
Rescaling $Y$ one can assume that $d\theta(Y(\Phi^Y_t(p_0))=1$.
Then the return map of every leaf is the same.
Scaling the vector field $Y$ by a positive function does not change the conjugacy class.
\end{proof}

Denote the Poincar\'{e} return maps defined in Lemma \ref{lem4} by $\Phi_{Y,\Gamma_{\theta}}$ and their conjugacy class by $[\Phi_{Y,\Gamma}]$.

\begin{prop}
Let $(\Gamma_{\theta})_{\theta\in S^1}$ be a foliation of $\partial\mathcal{N}_K$ by smoothly embedded circles containing the two circles of singularities. 
There exists a non-vanishing vector field $Y\in\Gamma(T\partial \mathcal{N}_K\cap \xi_{\eta})$ which is transverse to all $\Gamma_{\theta}$.
\end{prop}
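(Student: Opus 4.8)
The plan is to exploit that, away from the singular set, the subbundle $T\partial\mathcal{N}_K\cap\xi_\eta$ is exactly the (one-dimensional) characteristic line field of $\partial\mathcal{N}_K$, so any admissible $Y$ is \emph{forced} to be a nonzero multiple of it there and only its sign and length are free to choose. First I would orient this line field globally on the non-singular locus by the time function: by the proof of Theorem \ref{prop2} the lifted time function $T$ is strictly increasing along the characteristic leaves, so requiring $dT(Y)>0$ fixes a continuous orientation, and I normalise by $dT(Y)\equiv 1$. By Lemma \ref{lem2} and Proposition \ref{lem3} a leaf through $[\gamma]$ projects to a future null curve on $\partial K$, and $[\gamma]$ is singular precisely when $\gamma$ meets $L_K$; hence the singular set consists exactly of the two prescribed leaves $\Gamma_{\theta_1},\Gamma_{\theta_2}$.

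The first genuine step is to extend $Y$ non-vanishingly across these two circles. Over the timelike part the non-singular locus is the two-fold cover of Proposition \ref{lem3}, and on its two sheets $Y$ is the two distinct future null directions tangent to $\partial K$; as one approaches $L_K$ these merge into the single null direction $\ell$ spanning the radical of $h|_{T\partial K}$. Because $\mathcal{L}\partial K/\mathbb{R}_{>0}$ is a smooth manifold (Proposition \ref{lem3}), this merging is the smooth unfolding of the two null directions into one line field, and the normalisation $dT(Y)\equiv 1$ — legitimate since $\ell$ is future causal, so $dT(\ell)>0$ — makes $Y$ extend smoothly and without zeros across each $\Gamma_{\theta_i}$, with limit the future multiple of $\ell$. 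As $\ell$ is transverse to $L_K$ (the tangent to the singular circle is spacelike whereas $\ell$ spans the radical), the extended $Y$ is in particular transverse to the two singular leaves.

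The main obstacle is transversality of this now-fixed $Y$ to \emph{every} remaining leaf of the given foliation, that is $d\theta(Y)\neq 0$ where $\theta\colon\partial\mathcal{N}_K\to S^1$ is the submersion determined by $(\Gamma_\theta)$; I stress that the direction of $Y$ is not ours to adjust, so this must be verified for the prescribed foliation rather than circumvented by choosing a convenient one. I would argue it from the monotonicity of $T$: each $\Gamma_\theta$ with $\theta\neq\theta_i$ lies in one of the two annuli cut off by $\Gamma_{\theta_1}\cup\Gamma_{\theta_2}$, where $Y$ spans a single null foliation, and a tangency would force the projected circle to be tangent to that null direction at an interior point while $\Gamma_\theta$ is isotopic rel the singular circles to the spacelike curves $L_K$. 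The delicate point — the step I expect to be genuinely hard, since soft topology alone does not yield it — is to exclude such interior null tangencies; here one must invoke the Lorentzian geometry, using $dT(Y)\equiv 1$ and the no-closed-leaf property of Theorem \ref{prop2} to propagate the transversality already established near $\Gamma_{\theta_1}\cup\Gamma_{\theta_2}$ across each annulus, rather than merely homotopical data. Once transversality holds for all $\theta$, $Y$ together with $(\Gamma_\theta)$ satisfies the hypotheses of Lemma \ref{lem4}.
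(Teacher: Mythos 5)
Your first observation is correct and does match the geometry: off the two singular circles any admissible $Y$ must span the characteristic line field, and that line field does extend smoothly and transversally across the singular circles. But your mechanism for the extension --- normalising by $dT(Y)\equiv 1$ --- fails, and your claimed limit (``the future multiple of $\ell$'') is not the actual limit. Under the identification $\sigma$ of Proposition \ref{lem3}, the projection $\pi\circ\sigma\colon\partial\mathcal{N}_K\to\partial K$ folds along the two singular circles: it is two-to-one over $T_K$ and one-to-one over $L_K$. At a fold point the image of its differential is $T L_K$ and its kernel is the fibre direction of $\mathcal{L}\partial K/\mathbb{R}_{>0}$. Since the projected leaf directions converge to $\ell\notin TL_K$, the projected velocity of any \emph{bounded} field spanning the foliation must tend to $0$ at the singular circle; hence the limiting direction of the characteristic line field is the vertical (fibre) direction, not a lift of $\ell$. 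Consequently $dT(Y)=d\tau\bigl(d(\pi\circ\sigma)(Y)\bigr)\to 0$ along any bounded such $Y$, and imposing $dT(Y)\equiv 1$ forces $Y$ to blow up exactly at the circles you need to cross. (Concretely, for the rotationally symmetric $K_f$ of Section 4, in a smooth chart with fibre coordinate $m$ near a singular circle a leaf satisfies $m\sim\sqrt{t_0-t}$, so the time-normalised field has unbounded fibre component; this is also consistent with the proof of Theorem \ref{prop2}, where the field spanning the characteristic foliation is taken to \emph{vanish} near the singularities even though $T$ increases along leaves.) Your justification $d\tau(\ell)>0$ conflates the limit of projected \emph{directions} with the limit of the projected velocity vectors, which is $0$. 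Transversality of the extended field to the singular circles, on the other hand, is automatic once the extension exists, since the fold kernel is transverse to the fold locus --- but the existence and smoothness of the extension is exactly what your argument does not supply.

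The second gap is the one you yourself flag: transversality of the (forced) $Y$ to every leaf $\Gamma_\theta$ is never proved, and the tools you propose cannot prove it. A tangency of $\Gamma_\theta$ with the characteristic direction corresponds, downstairs, to the projected circle $\pi(\sigma(\Gamma_\theta))$ being tangent to one of the two null line fields on $T_K$, and monotonicity of $T$ along leaves together with the absence of closed leaves says nothing about such tangencies ($T$ restricted to $\Gamma_\theta$ may simply have a critical point there). The paper does not attempt any such soft propagation. Instead it works entirely on $\mathcal{L}\partial K/\mathbb{R}_{>0}$: it chooses a frame $v,w$ adapted to the given foliation (the horizontal part $v_1$ of $v$ tangent to the projected circles), prescribes a function $X_1$ vanishing exactly over $L_K$, and determines $X_2$ from the two equations
\begin{align*}
\eta(X_1v_1+X_2w_1,\;X_1v_1+X_2w_1)&=0,\\
X_1^2+X_2^2&=1,
\end{align*}
setting $Y=\sigma^{\ast}(X_1v+X_2w)$. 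Legendrianity (the projected vector is null), smoothness and non-vanishing across the singular circles, and transversality to the $\Gamma_\theta$ (read off from the $w$-component) all come out of this single algebraic ansatz at once. So both of the steps you correctly identified as the genuine content of the proposition --- the extension across the singular circles and the transversality to every leaf --- are precisely where your proposal is respectively incorrect and incomplete; the paper replaces them by the explicit frame construction above rather than by dynamical arguments.
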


\begin{proof}
Proposition \ref{lem3} implies that the map $\abb{\sigma}{\partial\mathcal{N}_K}{\mathcal{L}\partial K/\real{}_{>0}}$ is a diffeomorphism.
Denote by $\abb{\pi}{\mathcal{L}\partial K/\real{}_{>0}}{\partial K}$ the projection map.
The circles $\pi(\sigma(\Gamma_{\theta}))$ define a foliation of $T_K\cup L_K$ by embedded circles.
Let $v,w$ be a frame of $\mathcal{L}\partial K/\real{}_{>0}$.
Then $v=v_1+v_2$ and $w=w_1+w_2$, where $v_1(p),v_2(p)\in T_p\partial K $ and $v_2(p), w_2(p)\in T(\mathcal{L}_pN/\real{}_{>0})$.
Assume that $v_1$ is tangent to the foliation $\pi(\sigma(\Gamma_{\theta}))$.
Since $\mathcal{L}_pN/\real{}_{>0}$ is $1$-dimensional one can assume that $w_2=v_2$.
Note that for a vector field $X$ on $\mathcal{L}\partial K$ the pullback $\sigma^{\ast}X$ is Legendrian if and only if $d\pi (X)$ is lightlike.
Moreover $v_1|_{T_K}$ and $w_2|_{T_K}$ are a frame of $T_K$ and $w_2|_{L_K}=0$.

Let $\abb{X_1}{\mathcal{L}\partial K}{\real{}}$ be a smooth function such that $X_1(p)=0$ if and only if $\sigma^{-1}(p)$ is a singularity of the characteristic foliation.
The equations
\begin{align*}
\eta(X_1v_1+X_2w_1,X_1v_1+X_2w_1)&=0\\
X_1^2+X_2^2&=1
\end{align*}
 define a smooth function $\abb{X_2}{\mathcal{L}\partial K}{\real{}}$ .

Then $X:=X_1v+X_2w$ satisfies that $d\pi(X)=0$ and $X$ has no zeroes.
Hence $Y:=\sigma^{\ast}X$ defines a smooth non-vanishing vector field on $\partial\mathcal{N}_K$ contained in $\xi_{\eta}$.
\end{proof}

\begin{remark}
\begin{itemize}
\item[i)]All vector fields with the same property differ from $Y$ by multiplication with a smooth positive function.
The existence of $Y$ implies that the Euler characteristic of $\partial \mathcal{N}_K$ vanishes, i.e. it has to be homeomorphic to the torus.

\item[ii)] The conjugacy class of the Poincar\'{e} return map of the circles of singularities is determined by the lightlike curves on $\partial K$.

Let $[\gamma]\in\partial \mathcal{N}_K$ with $\gamma(0)\in T_K$.
Then $Y_{[\gamma]}$ is an equivalence class of Jacobi fields defined by a variation of null geodesics $\gamma_s$ such that $\gamma_s$ are tangent to $\partial K$ and $\frac{d}{ds}|_{s=0}\gamma_s$ is parallel to $\gamma'(0)$.
Due to Proposition \ref{lem3} the flow of $Y$ projected down to $T_K$ consists of lightlike curves on $\partial K$.
Through each point in $T_K$  run up to re-parametrisation exactly two lightlike curves determined by the two lightlike geodesics tangent to this point.
Furthermore each point of $L_K$ is the endpoint of two different lightlike curves on $\partial K$.
The Poincar\'{e} return map of a singular circle can then be obtained by following a smooth lightlike curve from one circle in $L_K$ to the second circle and going back along the second lightlike direction to the first circle.
\end{itemize}
\end{remark}

\begin{lemma}
Let $\abb{\psi}{\mathcal{N}_{\eta}}{\mathcal{N}_{\eta}}$ be a contactomorphism.
Then $[\Phi_{Y,\Gamma}]=[\Phi_{d\psi(Y),\psi(\Gamma)}]$.
\end{lemma}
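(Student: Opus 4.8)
The statement asserts that the conjugacy class of the Poincaré return map is a contact invariant: if $\psi$ is a contactomorphism of $(\mathcal{N}_\eta,\xi_\eta)$ and $Y$ is the distinguished Legendrian vector field transverse to a circle-foliation $(\Gamma_\theta)$ on $\partial\mathcal{N}_K$, then the return map of $Y$ on $\Gamma$ is conjugate to the return map of the pushed-forward data $d\psi(Y)$ on $\psi(\Gamma)$. The plan is to show that $\psi$ conjugates the \emph{entire dynamical system} consisting of the flow of $Y$ together with the transversal $\Gamma_\theta$, and that conjugacy of the flows restricts to smooth conjugacy of the induced first-return maps on the transversals.

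**The main argument.** First I would observe that $\psi\colon\partial\mathcal{N}_K\to\psi(\partial\mathcal{N}_K)$ is a diffeomorphism carrying the foliation $(\Gamma_\theta)_{\theta\in S^1}$ to the foliation $(\psi(\Gamma_\theta))_{\theta\in S^1}$ by embedded circles, and carrying the vector field $Y$ to $d\psi(Y)$, which is by construction transverse to the image foliation since $\psi$ is a diffeomorphism and transversality is preserved. Thus $d\psi(Y)$ together with $(\psi(\Gamma_\theta))$ satisfies exactly the hypotheses of Lemma \ref{lem4}, so the return map $\Phi_{d\psi(Y),\psi(\Gamma)}$ is defined and its conjugacy class is well-defined. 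Next I would exploit the fundamental intertwining relation between $\psi$ and the flows: since $d\psi(Y)$ is the pushforward of $Y$, the flows satisfy
$$\Phi_t^{d\psi(Y)}=\psi\circ\Phi_t^Y\circ\psi^{-1}$$
for all $t$. This is the key identity, and it is where the notion of pushforward of a vector field pays off.

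**Passing to the return maps.** Fix a leaf $\Gamma_{\theta_0}$ and a point $p_0\in\Gamma_{\theta_0}$. Let $t_0(p_0)$ be the first-return time of the $Y$-flow, so $\Phi^Y_{t_0}(p_0)=\Phi_{Y,\Gamma_{\theta_0}}(p_0)\in\Gamma_{\theta_0}$. Applying $\psi$ and using the intertwining relation, $\psi(\Phi^Y_{t_0}(p_0))=\Phi^{d\psi(Y)}_{t_0}(\psi(p_0))$ lands in $\psi(\Gamma_{\theta_0})$. Because $\psi$ is a diffeomorphism respecting both foliations, $t_0$ is simultaneously the first-return time for the image system, so $\Phi_{d\psi(Y),\psi(\Gamma_{\theta_0})}(\psi(p_0))=\psi(\Phi_{Y,\Gamma_{\theta_0}}(p_0))$. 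Writing $h:=\psi|_{\Gamma_{\theta_0}}\colon\Gamma_{\theta_0}\to\psi(\Gamma_{\theta_0})$, this reads
$$\Phi_{d\psi(Y),\psi(\Gamma)}\circ h=h\circ\Phi_{Y,\Gamma},$$
which is precisely a smooth conjugacy of circle diffeomorphisms; hence $[\Phi_{Y,\Gamma}]=[\Phi_{d\psi(Y),\psi(\Gamma)}]$.

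**Anticipated obstacle.** The routine parts are harmless, but one point deserves care: the conjugacy class in Lemma \ref{lem4} is defined up to the choice of positive rescaling of $Y$ and up to the choice of leaf. I would verify that the identity $t_0$ matches on both sides genuinely holds for the \emph{unrescaled} flows — that is, that transporting $Y$ to $d\psi(Y)$ does not secretly alter the return time in a way that depends on parametrisation. Since the return time is determined by the first hitting of the transversal and $\psi$ maps $Y$-orbits bijectively to $d\psi(Y)$-orbits while preserving the transversals pointwise-image-wise, the hitting times agree on the nose, so no rescaling issue arises. The mild subtlety I expect to spell out is that the conclusion is independent of which leaf $\Gamma_{\theta_0}$ one uses, but this is already guaranteed by the last assertion of Lemma \ref{lem4} that all leaves give conjugate return maps; combining this with the leafwise conjugacy above yields the claimed equality of conjugacy classes.
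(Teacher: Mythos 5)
Your proposal is correct and follows essentially the same route as the paper: both rest on the intertwining identity $\Phi_t^{d\psi(Y)}=\psi\circ\Phi_t^Y\circ\psi^{-1}$, which makes $\psi$ restricted to a leaf a smooth conjugacy between the two return maps. Your write-up merely spells out the return-time matching and leaf-independence that the paper leaves implicit.
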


\begin{proof}
The contactomorphism $\psi$ maps the characteristic foliation of $\partial\mathcal{N}_K$ to the one of $\psi(\partial\mathcal{N}_K)$ and the circles of singularities to circles of singularities.
Moreover $d\psi(Y)$ is a non-vanishing vector field contained in $\psi^{\ast}\xi_{\eta}$ on $\psi(\partial\mathcal{N}_K)$.
Let $\Gamma_0$ be a circle of singularities.
Let $p\in\Gamma_0$ and $q=\Phi(p)_{Y,\Gamma_0}$.
Then $\psi(q)=\Phi(p)_{d\psi(Y),\psi(\Gamma_0)}$.
Therefore $\Phi(p)_{Y,\Gamma_0}= \psi^{-1}\circ\Phi(p)_{Y,\Gamma_0}\circ \psi$.
\end{proof}

Consider a smooth strictly convex function $\abb{f}{(-1-\epsilon_1,1+\epsilon_2)}{\real{}_{\leq 0}}$ with $f''>0$, $f'(-1)=-1$ and $f'(1)=1$ for some $\epsilon_1,\epsilon_2>0$.
Assume that $\lim\limits_{t\rightarrow -1-\epsilon_1 }f(t)=\lim\limits_{t\rightarrow 1+\epsilon_2 }f(t)=0$.
Moreover assume that  the surface of revolution defined by rotating the graph of $f$ lying in the $(x,t)$-plane in $\real{1,2}$ around the $t$-axis is smooth.
Denote by $K_f$ the open subset of $\real{1,2}$ such that its boundary is this surface of revolution.

\begin{lemma}
The set $K_f$ is strongly null convex.
\end{lemma}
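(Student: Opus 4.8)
The plan is to verify the three defining conditions of Definition \ref{def1} for $K_f$, treating them in the order of difficulty. The set $K_f$ is a body of revolution in $\real{1,2}$ whose boundary is the surface obtained by rotating the graph of $f$ about the time axis; by construction $K_f$ is relatively compact and open. The most routine part is condition 2): the boundary $\partial K_f$ is a regular level set of a defining function. Writing $H(t,x,y) := f(t) - \sqrt{x^2+y^2}$ (or a smoothed version near the poles, where rotational smoothness of the surface is assumed), I would compute $\mathrm{Hess}_\eta(H)$ restricted to null tangent vectors and use the strict convexity $f''>0$ together with the strict-timelike/null structure to show it never vanishes on $\mathcal{L}N \cap T\partial K_f$. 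The key point is that the trace of the Hessian of a strictly convex profile against a null direction tangent to the rotationally symmetric surface cannot be zero, because a null geodesic tangent to the surface can only curve away from it — this is exactly the infinitesimal convexity encoded by $f''>0$.

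For conditions 1) and 3) I would exploit the conformal structure established earlier. By Example \ref{excs}, the radial dilation flow $f_s:(t,x)\mapsto(e^s t, e^s x)$ is a complete conformal flow on $\real{1,2}$, and $K_f$ contains the origin and is star-shaped in the Euclidean sense (its boundary is a graph $r = g(t)$ over the $t$-interval meeting each ray from the origin once, since $f \leq 0$ bounds a convex body of revolution). Thus $K_f$ is conformally star-shaped, so the apparatus of Theorem \ref{thm2} and the Corollary applies once I know $\iota_{K_f}$ is an embedding. To get condition 3) — that every null geodesic tangent to $\partial K_f$ meets it in a unique point — and the injectivity underlying condition 1), I would argue that a null geodesic in $\real{1,2}$ is an affine line, and a straight line tangent to the strictly convex surface $\partial K_f$ can touch it at most once: strict convexity of the bounded region forces any supporting line to have a single contact point. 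This simultaneously shows that tangent null geodesics meet $\partial K_f$ once and that $\iota_{K_f}$ is injective, since a null geodesic entering $K_f$ crosses the strictly convex boundary transversally in exactly two points and hence restricts to a single geodesic segment inside $K_f$.

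The main obstacle I anticipate is the interaction between the convexity hypothesis $f''>0$ and the \emph{Lorentzian} (rather than Euclidean) geometry near the lightlike part $L_{K_f}$ of the boundary. The conditions $f'(-1)=-1$ and $f'(1)=1$ are precisely the requirement that the profile become null at the two distinguished latitudes, so $L_{K_f}$ consists of the two circles where the surface is tangent to the light cone directions, while $T_{K_f}$ lies between them. Near these null circles the Euclidean convexity does not directly translate into the required Lorentzian Hessian nondegeneracy, and I would need to check that the rescaling of the second fundamental form by the degenerating induced metric still leaves $\mathrm{Hess}_\eta(H)(v,v)\neq 0$ for the null tangent $v$. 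Concretely, I expect to compute in the $(t,r)$ half-plane that the relevant quantity is proportional to $f''(t)\big(1 - f'(t)^2\big)^{-1}$ times a positive factor, which the strict convexity keeps strictly positive away from the null circles and whose limiting behavior at $t=\pm 1$ must be handled by the assumed smoothness of the surface of revolution. Once this sign computation is controlled, conditions 1)–3) all follow and $K_f$ is strongly null convex.
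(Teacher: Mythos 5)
Your treatment of conditions 1) and 3) of Definition \ref{def1} is exactly the paper's argument: since $f''>0$, the body $K_f$ is strictly convex in $\real{3}$, null geodesics of $(\real{1,2},\eta)$ are affine lines, a line meets a strictly convex body in a single segment, and a line tangent to $\partial K_f$ touches it in exactly one point; this gives simultaneously that $\iota_{K_f}$ is an embedding and that tangent null geodesics have a unique point of tangency. The detour through conformal star-shapedness (Example \ref{excs}, Theorem \ref{thm2}) is superfluous for the lemma and, as you state it, circular, since applying Theorem \ref{thm2} already presupposes condition 1).

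The gap is in condition 2), the step you explicitly leave open. Your defining function $H=f(t)-\sqrt{x^2+y^2}$ has the wrong sign (since $f\le 0$ its zero set is not $\partial K_f$) and is not smooth on the rotation axis; both defects are repairable, but the real problem is that the quantity you predict, $f''(t)\bigl(1-f'(t)^2\bigr)^{-1}$ times a positive factor, is incorrect, and it is this incorrect guess that creates your ``main obstacle'' at the circles $t=\pm1$. Carrying out the computation with the sign-corrected radial defining function $H=-f(t)-\sqrt{x^2+y^2}$, one finds for a null vector $v$ tangent to $\partial K_f$ that
$$\mathrm{Hess}_{\eta}(H)(v,v)=-(v^t)^2\left(f''(t)+\frac{1-f'(t)^2}{-f(t)}\right),$$
which is strictly negative on all of $[-1,1]$: at $t=\pm1$ the second summand vanishes and $f''>0$ alone carries the estimate, so there is no delicate limiting behaviour to control --- but as written your proposal never actually verifies condition 2) on $L_{K_f}$, which is precisely where it matters. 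The paper sidesteps all of this by choosing a defining function quadratic in the spatial coordinates, $H(t,x,y)=-f(t)-x^2-y^2$, whose Hessian in flat coordinates is the constant matrix $\mathrm{diag}(-f'',-2,-2)$, negative definite, so condition 2) holds for all vectors with no case analysis and no smoothing at the poles. Note also that your worry that Euclidean convexity ``does not directly translate'' into the Lorentzian Hessian condition is unfounded: $\eta$ and the Euclidean metric on $\real{3}$ have the same flat Levi-Civita connection in standard coordinates, so the two Hessians coincide as bilinear forms, and positivity of the second fundamental form of $\partial K_f$ (which $f''>0$ provides) already forces $\mathrm{Hess}_{\eta}(H)(v,v)\neq 0$ for every nonzero vector tangent to $\partial K_f$, null or not.
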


\begin{proof}
Since $f$ is strictly convex, the set $K_f$ is strictly convex in $\real{3}$.
Therefore every null geodesic in $\real{1,2}$ intersects $K_f$ only once, i.e. $\mathcal{N}_{K_f}$ is an open subset of $\mathcal{N}_{\eta}$.
Moreover every null geodesic tangent to $\partial K_f$ intersects $\partial K_f$ in a unique point.

Consider the regular function $H$ given in standard coordinates $(t,x,y)$ by 
$$H(t,x,y):=-f(t)-x^2-y^2.$$
Then for $t\in (-1-\epsilon_1,1+\epsilon_2)$ one has $\partial K_f=H^{-1}(0)$.
The Hessian 
$$\mathrm{Hess}_{\eta}(H)=\left(\begin{array}{ccc}
-f'' & 0 & 0 \\ 
0 & -2 &0 \\ 
0 & 0 & -2
\end{array} \right)$$
is negative definite.
\end{proof}

\begin{remark}
The properties $f'(-1)=-1$ and $f'(1)=1$ implies that $L_{K_f}=\{-1\}\times\{B_{f(-1)}(0)\}\cup \{1\}\times\{B_{f(1)}(0)\}$.
Furthermore $T_{K_f}$ is the surface of revolution defined by $f|_{(-1,1)}$.
In particular the space of null geodesics of $K_f$ only depends on $f|_{(-1,1)}$.
\end{remark}

\begin{prop}
The Poincar\'{e} return map of a singular circle in $\partial\mathcal{N}_{K_f}$ is conjugate to a rotation with angle 
$$2\integ{-1}{1}{\frac{\sqrt{1-(f')^2}}{f}}{t}.$$
\end{prop}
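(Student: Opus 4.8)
The plan is to work in cylindrical coordinates adapted to the rotational symmetry, read off the lightlike curves on $\partial K_f$ explicitly, and then reduce the abstract return map of Lemma \ref{lem4} to an elementary integral. First I would write $\real{1,2}$ in coordinates $(t,r,\phi)$, in which $\eta=-dt^2+dr^2+r^2\,d\phi^2$, and describe $\partial K_f$ as the surface of revolution $r=-f(t)$. Restricting $\eta$ to this surface gives the induced metric $((f')^2-1)\,dt^2+f^2\,d\phi^2$, which is Lorentzian exactly on $T_{K_f}=\{-1<t<1\}$ and degenerate on the two circles $L_{K_f}=\{t=\pm1\}$, in agreement with the Remark above. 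Solving the null condition $((f')^2-1)\,dt^2+f^2\,d\phi^2=0$ yields the two lightlike directions $\frac{d\phi}{dt}=\pm\frac{\sqrt{1-(f')^2}}{-f}$ on $T_{K_f}$. By Proposition \ref{lem3} these are precisely the projections under $\pi\circ\sigma$ of the leaves of the characteristic foliation of $\partial\mathcal{N}_{K_f}$, and the two singular circles correspond under the diffeomorphism $\sigma$ to $L_{K_f}$; I would parametrise the lower singular circle by the angle $\phi\in S^1$ inherited from $L_{K_f}$.

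Next I would identify the return map. By the Remark following the construction of the field $Y$, the Poincar\'e return map of a singular circle is obtained by following one lightlike family from the lower circle $\{t=-1\}$ up to the upper circle $\{t=1\}$ and coming back along the other lightlike family. Integrating the two ODEs gives, for the upward leg, $\Delta\phi_+=\integ{-1}{1}{\frac{\sqrt{1-(f')^2}}{-f}}{t}$; for the downward leg the sign flip of $\frac{d\phi}{dt}$ is cancelled by the reversal of the $t$-orientation, so it contributes the same amount, and the total angular advance is $2\,\integ{-1}{1}{\frac{\sqrt{1-(f')^2}}{-f}}{t}$. This integral is finite: the integrand is bounded and vanishes like $\sqrt{1\mp t}$ as $t\to\pm1$, since $f'\to\pm1$ there while $f(\pm1)\neq0$.

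Finally I would argue that the map is a rigid rotation. The crucial point is that $2\Delta\phi_+$ is independent of the starting angle $\phi_0$, because the integrand depends only on $t$; this reflects the invariance of the whole configuration under the rotations $\phi\mapsto\phi+c$, which lie in $O(1,2)$ and therefore lift to contactomorphisms of $\mathcal{N}_\eta$ preserving $\partial\mathcal{N}_{K_f}$, its characteristic foliation and its singular circles. Hence the return map is $\phi_0\mapsto\phi_0+2\Delta\phi_+$, a rotation, and by Lemma \ref{lem4} its conjugacy class represents the return map on every leaf. As $f<0$ one has $\frac{1}{-f}=-\frac{1}{f}$, so this angle equals $-2\,\integ{-1}{1}{\frac{\sqrt{1-(f')^2}}{f}}{t}$; the residual sign is only a matter of the orientation of $\phi$ relative to the flow, and the reflection $\phi\mapsto-\phi$ is a symmetry conjugating rotation by $\alpha$ to rotation by $-\alpha$, so the return map is conjugate to a rotation through the stated angle $2\,\integ{-1}{1}{\frac{\sqrt{1-(f')^2}}{f}}{t}$.

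I expect the main obstacle to be the careful justification that the abstract return map of Lemma \ref{lem4}, defined through the flow of the non-vanishing field $Y\in\Gamma(T\partial\mathcal{N}_{K_f}\cap\xi_\eta)$ transverse to the circle foliation $(\Gamma_\theta)$, genuinely coincides with the geometric description "up one null family, down the other". In particular one must check that a $Y$-orbit crosses each singular circle transversally and in finite time, and passes smoothly from one sheet of the double cover of $T_{K_f}$ to the other at the singular circles, where the two lightlike directions merge as $\sqrt{1-(f')^2}\to0$. Once this matching is established, the remaining computation is exactly the elementary integral above.
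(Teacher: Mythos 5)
Your proposal is correct and follows essentially the same route as the paper's own proof: both use the rotational symmetry to conclude the return map is a rigid rotation, compute the two lightlike directions on the surface of revolution by solving $\eta(\gamma',\gamma')=0$ (your induced-metric computation in cylindrical coordinates is the same calculation as the paper's explicit tangent-frame one), and obtain the angle as twice the integral of $d\phi/dt$ from $t=-1$ to $t=1$. Your additional care with the sign of $f$ and the finiteness of the integral near $t=\pm 1$ actually cleans up minor sloppiness present in the paper's version.
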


\begin{proof}
Since $T_{K_f}$ is rotational symmetric, the Poincar\'{e} return map of a circle in $L_{K_f}$ does not depend on the choice of starting point and has to be a rotation.
The tangent space for $p\in T_{K_f}$ is spanned by $\frac{\partial}{\partial \theta}$ and 
$$f'(t)\left(\cos (\theta)\frac{\partial}{\partial x}+\sin(\theta)\frac{\partial}{\partial y}\right)+\frac{\partial}{\partial t},$$
where $\theta$ denotes the angle of the polar coordinate in the slices of constant time.
Let $\gamma$ be a lightlike curve on $\partial K_f$ with 
$$\gamma'=\gamma_1'\frac{\partial}{\partial \theta}+\gamma_2'\left(f'(t)\left(\cos (\theta)\frac{\partial}{\partial x}+\sin(\theta)\frac{\partial}{\partial y}\right)+\frac{\partial}{\partial t}\right).$$
Then
$$\eta(\gamma',\gamma')=(\gamma_2'f')^2+(\gamma_1f)^2-(\gamma_2')^2=0.$$
Parametrising $\gamma$ with time one gets
$$(f')^2+(\gamma_1'f)=1.$$
Since $f$ is strictly convex one has $(f'|_{(-1,1)})^2<1$ and $f<0$.
Furthermore $\gamma_1'(t_0)=0$ if and only if $t_0=\pm 1$.
W.l.o.g. assume $\gamma_1'>0$.
Then 
$$\gamma_1'=\frac{\sqrt{1-(f')^2}}{f}.$$
Thus the rotation number of the Poincar\'{e} return map is twice the angle covered by $\gamma$ which is
$$2\integ{-1}{1}{\frac{\sqrt{1-(f')^2}}{f}}{t}.$$
\end{proof}

\begin{exmp}\label{ex2}
Consider $K_f$ for $f(x)=-\frac{1}{4n}x^{2n}+\frac{1}{4}x^2-c$, where $c>\frac{1}{2n}$ is fixed.
Changing $c$, the angle $2\integ{-1}{1}{\frac{\sqrt{1-(f')^2}}{f}}{t}$ can take any value in $(0,2\pi]$.
Hence for every angle one can find an $f$ such that the Poincar\'{e} return map of $K_f$ is conjugate to a rotation by that angle.
\end{exmp}

\begin{exmp}\label{ex3}
Let $C$ be a causal diamond in $\real{1,2}$, i.e. the domain of dependence of the open unit disc in $\{0\}\times \real{2}$.
Then $C$ is globally hyperbolic since $\{0\}\times D^2$ is a Cauchy hypersurface in $C$.
This implies that its space of null geodesics $\mathcal{N}_C$ is contactomorphic to $(ST^{\ast}\real{2},\xi_{st})$.
Its boundary are the fibres of $ST^{\ast}\real{2}$ over the unit circle in $\{0\}\times \real{2}$.
Hence the boundary has a foliation by Legendrian circles.
Write $p\in\partial\mathcal{N}_C$ as $p=(\theta,\psi)$, where $\theta,\psi\in S^1\cong [0,1]/\{0,1\}$.
Then for fixed $\epsilon\in\real{}$ the circle 
$$\{(\theta,\theta+\epsilon)\}\subset \partial\mathcal{N}_C$$
is transverse to the Legendrian circles.
Moreover two of the circles consist of singularities of the characteristic foliation.
The Poincar\'{e} return map of these singular circles is the identity.
\end{exmp}

The examples above provide various strongly causal subsets of $\real{1,2}$ that are diffeomorphic to a ball but not conformally equivalent as Lorentzian manifolds.
On the other hand the conjugacy class of the Poincar\'{e} return map defined above does not uniquely determine the conformal class of the metric.
For suitable choice of the convex function $f$ the Poincar\'{e} return map defined above can be conjugate to the identity map.
In this case $K_f$ is not conformal to the causal diamond since it is not globally hyperbolic.


\end{document}